\documentclass[11pt]{article}
\usepackage[T1]{fontenc} 
\usepackage{amsmath,amsthm,amssymb}
\usepackage{chemmacros} 
\usepackage{textgreek}  
\usepackage{geometry}
\usepackage{enumitem}   
\usepackage{hyperref}
\usepackage{mathtools}  

\hypersetup{colorlinks=true, linkcolor=blue, citecolor=blue, urlcolor=blue}
\geometry{margin=1in}

\theoremstyle{plain}
\newtheorem{theorem}{Theorem}[section]
\newtheorem{proposition}[theorem]{Proposition}
\newtheorem{lemma}[theorem]{Lemma}
\newtheorem{corollary}[theorem]{Corollary}
\theoremstyle{definition}
\newtheorem{example}[theorem]{Example}
\theoremstyle{remark}
\newtheorem{remark}[theorem]{Remark}

\newcommand{\D}{\mathbb{D}}

\begin{document}

\title{Transfinite Iteration of Operator Transforms and Spectral Projections in Hilbert and Banach Spaces}
\author{Faruk Alpay\thanks{Department of Mathematics, Lightcap Institute. \texttt{alpay@lightcap.ai}} \and Taylan Alpay\thanks{Department of Aerospace Engineering, Turkish Aeronautical Association.} \and Hamdi Alakkad\thanks{Department of Engineering, Bah\c{c}e\c{s}ehir University.}}
\date{}
\maketitle

\begin{abstract}
We develop an analytic framework for ordinal-indexed, multi-layer iterations of bounded linear operator transforms on Hilbert and reflexive Banach spaces. Given a normal operator $A$ on a Hilbert space with a family of $K$ polynomial or holomorphic transforms $\Phi_k(T) = p_k(T)$ satisfying $\sup_{\lambda\in\sigma(A)}|p_k(\lambda)| \le 1$, $p_k(1) = 1$, and with the \emph{peripheral spectrum} fixed under $p_k$, we prove that the transfinite iteration $A^{(\alpha)}$ (applying $\Phi_1,\dots,\Phi_K$ in cycles) converges in the strong operator topology by some countable stage $\alpha < \omega_1$ to an idempotent $P$ that commutes with $A$. The limit $P$ is the spectral projection of $A$ onto the joint fixed-point set and $\sigma(P) = \{0,1\}$. We also establish a precise spectral mapping property for every stage of the iteration under the functional calculus. In a reflexive Banach space setting, for a power-bounded Ritt operator (or sectorial generator of a bounded analytic semigroup) $A$ admitting a bounded $H^\infty$ functional calculus, an analogous multi-layer iteration $\Phi_K \circ \cdots \circ \Phi_1$ (with $\sup | \varphi_k| \le 1$, $\varphi_k(1) = 1$) converges in strong operator topology to a bounded projection $P$ commuting with $A$. The range of $P$ consists of vectors invariant under all transforms (in particular, $AP = PA$). We provide proofs via the spectral theorem (normal case) and via resolvent and ergodic techniques (Ritt/sectorial case), along with explicit examples and counterexamples. A Fej\'er-monotonicity argument yields an ordinal bound $\le \omega$ (countable) for stabilization. We construct cases achieving the extremal countable stage and show that allowing non-commuting layers or dropping the Ritt/sectorial assumptions can destroy convergence. Connections with classical mean ergodic projections~\cite{DunfordSchwartz1,DunfordSchwartz2} and stability theorems of Arendt--Batty and Lyubich--Vũ~\cite{DunfordSchwartz2,ArendtBatty1988,LyubichVu1988} are discussed.
\end{abstract}

\section{Introduction}\label{sec:intro}
Iterative methods for projecting onto invariant subspaces and spectral subspaces are a classical theme in operator theory and ergodic theory~\cite{DunfordSchwartz1,DunfordSchwartz2}. A basic instance is the power method: if $A$ is a contraction on a Hilbert space, then $A^n$ often converges (strongly) to a projection onto the fixed-point space $\ker(I - A)$ by the mean ergodic theorem. More generally, for a power-bounded operator with spectral radius $1$, classical results (e.g.\ the Katznelson--Tzafriri theorem and the Arendt--Batty--Lyubich--Vũ stability theorem) give conditions under which $A^n \to P$ as $n \to \infty$, where $P$ is the projection onto the eigenspace at eigenvalue $1$. In continuous time, similar phenomena occur: a $C_0$-semigroup $T(t) = e^{tA}$ may converge strongly to a projection as $t \to \infty$ under spectral assumptions on the generator $A$ (e.g.\ no spectrum on the imaginary axis except $0$). Such limits are \emph{ergodic projections} in the sense of mean ergodic theory.

In this work, we introduce a broad iteration scheme that generalizes these one-parameter ergodic results to \emph{multi-layer, transfinite iterations of operator transforms}. Concretely, we consider a family of $K$ operator functions $\Phi_1,\dots,\Phi_K$ (``layers'') and define an ordinal-indexed sequence of operators: $A^{(\alpha+1)} = \Phi_K(\cdots \Phi_1(A^{(\alpha)})\cdots)$ for successor ordinals, and at a limit ordinal $\lambda$ we define $A^{(\lambda)} = \text{SOT-}\lim_{\alpha\to\lambda} A^{(\alpha)}$ assuming the limit exists in the strong operator topology. This transfinite iterative process continues (if necessary) through countable ordinals. We show that under suitable analytic conditions on the $\Phi_k$, the sequence $(A^{(\alpha)})_{\alpha<\omega_1}$ stabilizes by some countable stage: there is an ordinal $\alpha_* < \omega_1$ such that $A^{(\alpha)} = A^{(\alpha_*)}$ for all $\alpha \ge \alpha_*$. In fact, we obtain explicit bounds on $\alpha_*$ in terms of the properties of the transforms (often $\alpha_* \le \omega$, the first countable ordinal). The limiting operator $A^{(\alpha_*)}$ is a bounded idempotent (projection) $P$ that commutes with the original operator $A$~\cite{ArendtBatty1988,LyubichVu1988}.

We treat two primary frameworks:

\textbf{(I) Hilbert space (normal $A$):} $A$ is a normal operator on a complex Hilbert space $H$. We assume $\Phi_k(T) = p_k(T)$ for $k=1,\dots,K$, where each $p_k$ is a polynomial or a bounded holomorphic function on an open set containing $\sigma(A)$. These $p_k$ are \textbf{Schur holomorphic functions} on the spectrum, meaning $\sup_{\lambda\in\sigma(A)}|p_k(\lambda)| \le 1$, and they fix the peripheral spectrum: in particular $p_k(1) = 1$, and for every $\lambda\in\sigma(A)$ with $|\lambda| = 1$, if $|p_k(\lambda)| = 1$ then $p_k(\lambda) = \lambda$. (This is the \emph{peripheral fixed-point property}.) Such $\Phi_k$ clearly commute with each other (since they are functions of $A$ or its iterates), so the multi-layer composition $\Phi_K \circ \cdots \circ \Phi_1$ applied to $A^{(\alpha)}$ is unambiguous.

\textbf{(II) Banach space (power-bounded $A$):} $A$ is a power-bounded linear operator on a reflexive Banach space $X$ (discrete-time case), or $-A$ generates a bounded analytic $C_0$-semigroup (continuous-time case). We assume $A$ admits a bounded $H^\infty$ functional calculus on the spectral domain (a Stolz disk containing $\sigma(A)$ in the discrete case, or a sector containing $\sigma(A)$ in the continuous case~\cite{HaaseFC,CDMY96}). We take $\Phi_k(T) = \varphi_k(T)$ where each $\varphi_k$ is a rational or holomorphic function on the domain of $A$ (unit disk or sector) such that $\sup|\varphi_k| \le 1$. These conditions ensure $\varphi_k(A)$ is well-defined (via the functional calculus) and $\|\varphi_k(A)\| \le C$ for some constant $C$, and again $\varphi_k$ fixes the point $1$ on the boundary of $\sigma(A)$ (the unit circle or spectral boundary). We consider the same transfinite iteration which is well-defined since each $\varphi_k$ preserves the class of operators (Ritt or sectorial) and the $\varphi_k(A^{(\alpha)})$ all commute (being functional calculus images of a common original $A$ or its iterates).

Under these assumptions, our main results can be summarized as follows.

\section{Convergence results in the Hilbert space (normal) case}\label{sec:normal-case}

\noindent \textbf{Standing assumptions (normal case).} Let $A \in B(H)$ be a normal operator on a Hilbert space $H$, with spectral measure $E_A$ and spectrum $\sigma(A)\subseteq U$ (where $U\subset\mathbb{C}$ is an open set). Let $p_1,\dots,p_K$ be functions holomorphic on $U$ (polynomials are a special case) with a uniform bound $\sup_{\lambda\in\sigma(A)}|p_k(\lambda)| \le 1$. Define the composite function $f := p_K \circ \cdots \circ p_1$, and denote by $f^m$ the $m$-fold composition ($f^m := f \circ f \circ \cdots \circ f$, $m$ times). We apply these to $A$ via the functional calculus: write $A^{(m)} = f^m(A)$ for $m\in\mathbb{N}$ (so $A^{(0)}=A$ and $A^{(m+1)} = f(A^{(m)}) = f^{m+1}(A)$). At the first limit stage $m \to \infty$, if the strong operator limit exists we denote $A^{(\omega)} = \lim_{m\to\infty} A^{(m)}$. Note that by the properties of the normal functional calculus, $A^{(m)} = (f^m)(A)$, and two standard properties will be used tacitly throughout:
\begin{itemize}
\item \textbf{(Composition in the calculus).} If $h$ is a bounded Borel function on $\sigma(A)$ and $g$ is a bounded Borel function on $h(\sigma(A))$, then $g(h(A)) = (g\circ h)(A)$.
\item \textbf{(Dominated convergence in the calculus).} If $g_n$ are bounded Borel functions on $\sigma(A)$ with $\sup_n\|g_n\|_\infty < \infty$ and $g_n(\lambda)\to g(\lambda)$ pointwise on $\sigma(A)$, then $g_n(A) \to g(A)$ in strong operator topology.
\end{itemize}
(Proofs of these facts can be found in Appendix~A as Lemmas~\ref{lem:dominated} and \ref{lem:composition} respectively.)

We now present two main theorems (labeled A' and B') for the Hilbert space case, concerning convergence to a projection (Theorem~A') and the spectral behavior along the iteration (Theorem~B'). These results are formulated under general conditions that will later be verified by more specific assumptions (see Lemma~\ref{lem:conv-to-01} and Corollary~\ref{cor:spec-projection} below).

\begin{theorem}[A' -- Convergence via function iteration $\implies$ spectral projection]\label{thm:aprime}
\emph{Suppose (under the standing setup above) that the sequence of functions $f^m$ is uniformly bounded and converges pointwise on $\sigma(A)$ to a (Borel) limit function $\chi:\sigma(A)\to\mathbb{C}$. That is, assume:
$\sup_m \|f^m\|_\infty < \infty$ and $\lim_{m\to\infty} f^m(\lambda) = \chi(\lambda)$ for all $\lambda\in\sigma(A)$.
Then the strong limit $A^{(\omega)}$ exists and $A^{(\omega)} = \chi(A)$.
If, moreover, $\chi$ takes only the values $0$ and $1$, then $P := \chi(A)$ is an orthogonal projection that commutes with $A$, and
$\mathrm{Ran}\,P = E_A(\chi^{-1}(\{1\}))H$.
(Indeed, $\sigma(P)=\{0,1\}$ unless $P=0$ or $P=I$.)}
\end{theorem}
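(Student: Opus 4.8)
The plan is to reduce every assertion to the two calculus facts recorded in the standing setup together with the elementary algebra of the spectral measure $E_A$. First I would dispatch the convergence claim. By the composition property we already have $A^{(m)} = f^m(A)$ for every $m$, so the operator sequence is exactly the image of the function sequence $(f^m)$ under the normal functional calculus. The hypotheses supply precisely the input required by the Dominated Convergence Lemma (Lemma~\ref{lem:dominated}): the $f^m$ are uniformly bounded Borel functions on $\sigma(A)$ converging pointwise to $\chi$. Applying that lemma with $g_m = f^m$ and $g = \chi$ yields $f^m(A) \to \chi(A)$ in the strong operator topology, i.e.\ $A^{(\omega)}$ exists and equals $\chi(A)$. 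No quantitative rate enters; only the uniform bound is needed to dominate.

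Next, assuming $\chi$ takes only the values $0$ and $1$, I would show that $P := \chi(A)$ is an orthogonal projection. The key observation is that a $\{0,1\}$-valued Borel function satisfies the pointwise identities $\chi^2 = \chi$ and $\overline{\chi} = \chi$ on $\sigma(A)$. Since the Borel functional calculus of a normal operator is a $*$-homomorphism, these translate directly into $P^2 = (\chi^2)(A) = \chi(A) = P$ and $P^* = \overline{\chi}(A) = \chi(A) = P$, so $P$ is self-adjoint and idempotent, hence an orthogonal projection. Commutation with $A$ is automatic, since $P$ is a function of $A$: writing $\mathrm{id}(\lambda)=\lambda$, multiplicativity gives $AP = (\mathrm{id}\cdot\chi)(A) = (\chi\cdot\mathrm{id})(A) = PA$.

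For the range I would identify $\chi$ with the indicator function $\mathbf{1}_S$ of the Borel set $S := \chi^{-1}(\{1\}) \subseteq \sigma(A)$; the defining property of the spectral measure is exactly $\mathbf{1}_S(A) = E_A(S)$, whence $P = E_A(S)$ and $\mathrm{Ran}\,P = E_A(\chi^{-1}(\{1\}))H$. For the spectrum, $P^2 = P$ forces $\sigma(P)\subseteq\{0,1\}$ (the minimal polynomial divides $t^2-t$), and it remains only to note that $1\in\sigma(P)$ precisely when $P\neq 0$, i.e.\ $E_A(S)\neq 0$, while $0\in\sigma(P)$ precisely when $P\neq I$, i.e.\ $E_A(\sigma(A)\setminus S)\neq 0$; hence $\sigma(P)=\{0,1\}$ unless $P=0$ or $P=I$.

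I expect no serious obstacle: each step is a direct invocation of a stated lemma or of the $*$-homomorphism and spectral-measure structure. The only point demanding care is that $\chi$ is merely Borel rather than continuous, so one must work with the Borel functional calculus throughout and cannot appeal to a spectral mapping theorem of the form $\sigma(\chi(A))=\overline{\chi(\sigma(A))}$; the idempotent characterization of $\sigma(P)$ used above sidesteps this cleanly and keeps the argument self-contained.
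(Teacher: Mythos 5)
Your proposal is correct and follows essentially the same route as the paper: strong convergence via the dominated-convergence lemma for the Borel functional calculus, followed by the identification $\chi(A)=E_A(\chi^{-1}(\{1\}))$ and the observation that this is an orthogonal projection commuting with $A$. The only (harmless) differences are that you derive idempotency and self-adjointness from the $*$-homomorphism identities $\chi^2=\chi$, $\overline{\chi}=\chi$ rather than recognizing $\chi(A)$ immediately as a spectral projection, and you spell out the $\sigma(P)=\{0,1\}$ claim that the paper leaves as a parenthetical remark.
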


\begin{proof}
By the dominated convergence property of the normal calculus, $f^m(A) = (f^m)(A) \to \chi(A)$ strongly as $m\to\infty$. If $\chi$ takes only the values $0$ and $1$, then $\chi(A) = \int \chi\,dE_A = E_A(\chi^{-1}(\{1\}))$, which is an orthogonal projection (since $\chi^{-1}(\{1\})$ is a measurable subset of $\sigma(A)$) and clearly commutes with $A$ (being a function of $A$). The range of $\chi(A)$ is $\mathrm{Ran}\,E_A(\chi^{-1}(\{1\}))$, which is exactly the subspace of $H$ where $A$ acts with spectrum in $\chi^{-1}(\{1\})$. This shows the stated description of $P$ and its range.
\end{proof}

\begin{theorem}[B' -- Spectral mapping along the iteration]\label{thm:bprime}
\emph{Under the same standing assumptions, one has:
\begin{itemize}
\item[\textup{(i)}] \textup{(Finite stages).} For each finite $m\ge 1$,
$\sigma(A^{(m)}) = \sigma(f^m(A)) = f^m(\sigma(A))$.
In other words, spectral mapping holds as an equality at every finite iterate (since $f^m$ is holomorphic on a neighborhood of $\sigma(A)$).
\item[\textup{(ii)}] \textup{(Limit stage).} If $f^m \to \chi$ pointwise on $\sigma(A)$ with $\sup_m \|f^m\|_\infty < \infty$ (so that $A^{(\omega)} = \chi(A)$ exists by Theorem~\ref{thm:aprime}), then
$\sigma(A^{(\omega)}) \subseteq \overline{\chi(\sigma(A))}$.
In particular, if $\chi\in\{0,1\}$ (so that $A^{(\omega)}=P$ is a projection), then $\sigma(A^{(\omega)}) \subseteq \{0,1\}$ (and in fact $\sigma(P) = \{0,1\}$ unless $P$ is trivial $0$ or $I$).
\end{itemize}}
\end{theorem}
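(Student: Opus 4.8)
The plan is to handle the two parts with different tools matched to their regularity: part (i) is a spectral mapping statement at each finite, holomorphic stage, whereas part (ii) is a Borel-calculus inclusion at the limit stage, where only pointwise convergence of the $f^m$ is available.

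For part (i), I would invoke the spectral mapping theorem for the holomorphic functional calculus. By hypothesis each $f^m$, the $m$-fold composite of $f = p_K \circ \cdots \circ p_1$, is holomorphic on the open set $U \supseteq \sigma(A)$, and on holomorphic functions the Borel calculus used to define $A^{(m)} = f^m(A)$ agrees with the Riesz--Dunford calculus. The Riesz--Dunford spectral mapping theorem then yields $\sigma(f^m(A)) = f^m(\sigma(A))$ as an exact equality; no closure is needed, since $\sigma(A)$ is compact and $f^m$ is continuous, so $f^m(\sigma(A))$ is already closed. Equivalently, staying inside the normal framework, one uses that for $g$ continuous on the compact set $\sigma(A)$ one has $\sigma(g(A)) = g(\sigma(A))$, applied to $g = f^m$; the composition identity $(f^m)(A) = A^{(m)}$ from the standing assumptions closes the loop.

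For part (ii), the essential tool is the essential-range description of the spectrum in the Borel calculus: for a bounded Borel function $g$ on $\sigma(A)$, the spectrum $\sigma(g(A))$ equals the $E_A$-essential range of $g$, namely the set of $z$ such that $E_A\big(g^{-1}(B(z,\varepsilon))\big) \neq 0$ for every $\varepsilon > 0$, where $B(z,\varepsilon)$ denotes the open ball. Applying this to $g = \chi$, which is Borel and bounded as a pointwise limit of the uniformly bounded family $\{f^m\}$ (so that $A^{(\omega)} = \chi(A)$ by Theorem~\ref{thm:aprime}), I would argue the inclusion $\sigma(\chi(A)) \subseteq \overline{\chi(\sigma(A))}$ directly: if $z \notin \overline{\chi(\sigma(A))}$, choose $\varepsilon$ with $B(z,\varepsilon) \cap \chi(\sigma(A)) = \emptyset$, whence $\chi^{-1}(B(z,\varepsilon)) = \emptyset$ and its $E_A$-measure vanishes, so $z \notin \sigma(\chi(A))$. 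When $\chi$ takes only the values $0$ and $1$, the set $\chi(\sigma(A))$ sits inside the closed set $\{0,1\}$, giving $\sigma(A^{(\omega)}) \subseteq \{0,1\}$. For the sharp form $\sigma(P) = \{0,1\}$, I would use that $P = \chi(A)$ is a self-adjoint orthogonal projection: from $P^2 = P$ and polynomial spectral mapping, $\sigma(P) \subseteq \{0,1\}$; if $0 \notin \sigma(P)$ then $P$ is invertible and $P^2 = P$ forces $P = I$, while if $1 \notin \sigma(P)$ then $\sigma(P) = \{0\}$, and since $P$ is normal its norm equals its spectral radius, so $\|P\| = 0$ and $P = 0$. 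Hence both $0$ and $1$ lie in $\sigma(P)$ whenever $P$ is nontrivial.

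The point demanding the most care — and the reason the theorem asserts only an inclusion in (ii) after an equality in (i) — is the asymmetry between the finite and limit stages, which cannot in general be removed. Pointwise convergence of $f^m$ to $\chi$ need not be uniform, $\chi$ need not be continuous, and any value of $\chi$ attained only on an $E_A$-null set is invisible to the essential range; consequently $\chi(\sigma(A))$ may strictly contain $\sigma(\chi(A))$. I would therefore resist claiming $\sigma(A^{(\omega)}) = \overline{\chi(\sigma(A))}$, and instead flag, ideally with an explicit null-set example, precisely where the finite-stage spectral mapping degrades to a one-sided containment at the limit: on the peripheral spectrum and on $E_A$-null sets.
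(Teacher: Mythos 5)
Your proof is correct and follows essentially the same route as the paper: holomorphic (Riesz--Dunford) spectral mapping for the finite stages, and the essential-range description of $\sigma(\chi(A))$ (the paper's Proposition~\ref{prop:ess-range}) combined with the containment of the essential range in $\overline{\chi(\sigma(A))}$ for the limit stage. Your explicit verification that a nontrivial orthogonal projection has spectrum exactly $\{0,1\}$ merely fills in a step the paper asserts without proof.
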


\begin{proof}
Part (i) is immediate: since $f^m$ is holomorphic on an open set containing $\sigma(A)$, the spectral mapping theorem for normal operators gives $\sigma(f^m(A)) = f^m(\sigma(A))$ (no inclusion restriction is needed in the normal case).

For part (ii), we know $A^{(\omega)} = \chi(A)$ by Theorem~\ref{thm:aprime}. By functional calculus, $\sigma(\chi(A)) = \overline{\chi(\sigma(A))}$ if $\chi$ were continuous on $\sigma(A)$. However, $\chi$ might be just a bounded Borel function (for example, $\chi$ could be discontinuous if it jumps between $0$ and $1$ on different parts of the spectrum). In general, one only has $\sigma(\chi(A)) = \mathrm{ess\,range}_{E_A}(\chi)$, the essential range of $\chi$ with respect to the spectral measure $E_A$. Proposition~\ref{prop:ess-range} below provides a proof of this fact. Since the essential range $\mathrm{ess\,range}_{E_A}(\chi)$ is always a subset of the closure of the pointwise range $\overline{\chi(\sigma(A))}$, we have $\sigma(A^{(\omega)}) \subseteq \overline{\chi(\sigma(A))}$. Finally, if $\chi$ only takes values $0,1$, then $\chi(\sigma(A))\subseteq\{0,1\}$ and hence $\sigma(P)\subseteq\{0,1\}$. (In fact, as noted, $P=\chi(A)$ is a nonzero projection unless $A^{(\omega)}$ turned out trivial $0$ or $I$, so $\sigma(P)$ must equal $\{0,1\}$.)
\end{proof}

We isolate here the general fact used above, for completeness:

\begin{proposition}[Essential-range description of $\sigma(\chi(A))$]\label{prop:ess-range}
Let $A$ be a normal operator with spectral measure $E_A$, and let $\chi:\sigma(A)\to\mathbb{C}$ be a bounded Borel function. Then
$\sigma(\chi(A)) = \{\lambda \in \mathbb{C} : E_A(\{\omega \in \sigma(A) : |\chi(\omega) - \lambda| < \varepsilon\}) \ne 0 \text{ for all } \varepsilon > 0\}$.
\end{proposition}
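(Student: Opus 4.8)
The plan is to establish the claimed equality by proving the two inclusions separately, exploiting the elementary identity $\chi(A) - \lambda I = (\chi - \lambda)(A)$ together with the multiplicativity and $*$-homomorphism properties of the Borel functional calculus. Throughout I would write $\mu_x := \langle E_A(\cdot)x, x\rangle$ for the scalar spectral measure attached to a vector $x \in H$, and lean on the fundamental identity $\|g(A)x\|^2 = \int_{\sigma(A)} |g|^2 \, d\mu_x$, valid for every bounded Borel function $g$ (it follows from $\langle g(A)x, g(A)x\rangle = \langle |g|^2(A)x, x\rangle$). This identity is the workhorse for both directions, and the abbreviation $\mathrm{ess\,range}_{E_A}(\chi)$ will denote the set on the right-hand side of the proposition.

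For the inclusion $\sigma(\chi(A)) \subseteq \mathrm{ess\,range}_{E_A}(\chi)$, I would argue the contrapositive. Suppose $\lambda$ lies outside the essential range, so there is an $\varepsilon > 0$ with $E_A(S) = 0$ for $S := \{\omega : |\chi(\omega) - \lambda| < \varepsilon\}$. Then $|\chi - \lambda| \ge \varepsilon$ holds $E_A$-almost everywhere, so after modification on the $E_A$-null set $S$ the reciprocal $g := (\chi - \lambda)^{-1}$ is a bounded Borel function with $\|g\|_{L^\infty(E_A)} \le \varepsilon^{-1}$. Multiplicativity of the calculus then gives $g(A)(\chi(A) - \lambda I) = (\chi(A) - \lambda I)g(A) = I$, so $\chi(A) - \lambda I$ is invertible and $\lambda \notin \sigma(\chi(A))$.

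For the reverse inclusion, suppose $\lambda \in \mathrm{ess\,range}_{E_A}(\chi)$. Then for each $n$ the set $S_n := \{\omega : |\chi(\omega) - \lambda| < 1/n\}$ satisfies $E_A(S_n) \ne 0$, so the projection $E_A(S_n)$ has nontrivial range and I may select a unit vector $x_n \in \mathrm{Ran}\, E_A(S_n)$. Since $\mu_{x_n}$ is then concentrated on $S_n$, the fundamental identity yields $\|(\chi(A) - \lambda I)x_n\|^2 = \int_{S_n} |\chi - \lambda|^2 \, d\mu_{x_n} \le 1/n^2$. Thus $\chi(A) - \lambda I$ fails to be bounded below, hence cannot be invertible (an invertible $T$ obeys $\|Tx\| \ge \|T^{-1}\|^{-1}\|x\|$), and so $\lambda \in \sigma(\chi(A))$. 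Combining the two inclusions gives the asserted equality.

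The individual steps are routine; the two points demanding care are (a) justifying that the reciprocal $g$ is a legitimate bounded symbol — this is precisely where the essential-range hypothesis supplies the uniform lower bound $\varepsilon$ that prevents $g$ from blowing up off a null set, and where the composition/multiplicativity property must be invoked for the specific symbol $(\chi - \lambda)^{-1}$ restricted to the essential support of $\chi$ — and (b) guaranteeing the approximate eigenvectors exist, which reduces to the basic fact that $\mathrm{Ran}\, E_A(S_n)$ is nonzero exactly when $E_A(S_n) \ne 0$. I expect (a) to be the subtlest bookkeeping item, since it is the one place where we must be explicit that the calculus tolerates the modification of $g$ on an $E_A$-null set without affecting $g(A)$.
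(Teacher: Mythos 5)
Your proposal is correct and follows essentially the same route as the paper's own ``direct operator-theoretic proof'': the bounded reciprocal $(\chi-\lambda)^{-1}$ off the essential range yields the inverse of $\chi(A)-\lambda I$, and approximate eigenvectors supported on the sets $\{|\chi-\lambda|<1/n\}$ handle the converse. The only cosmetic difference is that you take a unit vector directly in $\mathrm{Ran}\,E_A(S_n)$ rather than normalizing $E_A(B_n)x_n$ as the paper does, which is a slight simplification of the same idea.
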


\begin{proof}
In the spectral representation (via the Spectral Theorem), $A$ is unitarily equivalent to a multiplication operator $M_a$ on some $L^2$ space such that $A \cong M_a$ and $\chi(A) \cong M_{\chi\circ a}$. In this model, $\sigma(\chi(A))$ is the essential range of $\chi\circ a$ with respect to the underlying measure. But since the measure associated with $A$ is precisely $\mu_x(B) = \langle E_A(B)x,x\rangle$ (for $x$ in an orthonormal basis), the essential range of $\chi\circ a$ equals the set given in the statement (which is defined in terms of $E_A$)~\cite{BadeaEtAl,Davidson}. A direct operator-theoretic proof can be given as follows. If $\lambda$ lies outside the essential range of $\chi$, then there exists $\delta>0$ such that $|\chi(\omega)-\lambda|\ge \delta$ for $E_A$-almost all $\omega$. This implies $(\chi-\lambda)^{-1}$ is essentially bounded on $\sigma(A)$ and thus defines a bounded Borel function. By functional calculus, $(\chi(A) - \lambda I)$ then has a bounded two-sided inverse $(\chi-\lambda)^{-1}(A)$. Hence $\lambda \not\in \sigma(\chi(A))$. Conversely, if $\lambda$ lies in the essential range of $\chi$, we can construct approximate eigenvectors for $\chi(A)$ with eigenvalue $\lambda$ by concentrating on parts of the spectrum where $\chi(\omega)$ is arbitrarily close to $\lambda$. More precisely, for each $n$ choose a Borel set $B_n \subseteq \{\omega: |\chi(\omega)-\lambda| < 1/n\}$ with $E_A(B_n)\neq 0$; pick a unit vector $x_n$ with $\|E_A(B_n)x_n\| \ge \frac12\|x_n\|$ (possible since the range of $E_A(B_n)$ is nonzero). Then $y_n := E_A(B_n)x_n/\|E_A(B_n)x_n\|$ is a unit vector such that $\|(\chi(A)-\lambda I)y_n\|^2 = \int_{B_n} |\chi(\omega)-\lambda|^2\,d\mu_{y_n}(\omega) < 1/n^2$. Thus $\chi(A) - \lambda I$ has approximate kernel vectors, implying $\lambda \in \sigma(\chi(A))$.
\end{proof}

In the context of Theorem~\ref{thm:bprime}, one important situation is when the limit function $\chi$ is exactly the \emph{characteristic function} of the joint peripheral fixed-point set: $\Sigma_{\text{fix}} = \{\lambda \in \sigma(A) : |\lambda|=1, p_k(\lambda)=\lambda \;\forall k\}$. In that case $\chi(A) = E_A(\Sigma_{\text{fix}})$ is precisely the spectral projection of $A$ onto $\Sigma_{\text{fix}}$. The next corollary shows that, under mild additional hypotheses on the dynamics of the function iteration, this is indeed the outcome of our process. Essentially, we require that $f^m(\lambda) \to 1$ on $\Sigma_{\text{fix}}$ and $f^m(\lambda) \to 0$ off $\Sigma_{\text{fix}}$, which means the iteration asymptotically separates the peripheral part of the spectrum from the rest. We will later justify these bullet conditions as a consequence of the peripheral fixed-point property using complex function theory (Lemma~\ref{lem:conv-to-01}).

\begin{corollary}[Projection onto a prescribed spectral set]\label{cor:spec-projection}
Suppose, in addition to the standing assumptions, that $p_k(1) = 1$ for each $k$, and that the function iterates $f^m = f^{\circ m}$ satisfy:
\begin{itemize}
\item $f^m(\lambda) \to 1$ for all $\lambda \in \Sigma_{\text{fix}}$,
\item $f^m(\lambda) \to 0$ for all $\lambda \in \sigma(A)\setminus \Sigma_{\text{fix}}$,
\item $\sup_m \|f^m\|_\infty \le 1$.
\end{itemize}
Then, by Theorem~\ref{thm:aprime}, $A^{(\omega)} = \lim_{m\to\infty} f^m(A) = E_A(\Sigma_{\text{fix}})$, which is an orthogonal projection commuting with $A$.
\end{corollary}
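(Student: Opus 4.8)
The plan is to read the three bulleted hypotheses as exactly the pointwise-convergence-and-uniform-boundedness premises of Theorem~\ref{thm:aprime}, applied to the indicator function $\chi := \mathbf{1}_{\Sigma_{\text{fix}}}$. First I would set $\chi(\lambda) = 1$ for $\lambda \in \Sigma_{\text{fix}}$ and $\chi(\lambda) = 0$ for $\lambda \in \sigma(A)\setminus\Sigma_{\text{fix}}$. Since $\{\Sigma_{\text{fix}}, \sigma(A)\setminus\Sigma_{\text{fix}}\}$ partitions $\sigma(A)$, the first two bullets together say precisely that $f^m(\lambda) \to \chi(\lambda)$ for every $\lambda \in \sigma(A)$, while the third bullet supplies $\sup_m \|f^m\|_\infty \le 1 < \infty$. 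Thus both hypotheses of Theorem~\ref{thm:aprime} hold with this specific $\chi$.

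Before invoking the theorem I would dispatch the one point of genuine (if routine) care: that $\chi$ is a Borel function, equivalently that $\Sigma_{\text{fix}}$ is a Borel subset of $\sigma(A)$, so that the spectral projection $E_A(\Sigma_{\text{fix}})$ is meaningful. Because each $p_k$ is holomorphic, hence continuous, on the open set $U \supseteq \sigma(A)$, each equalizer $\{\lambda : p_k(\lambda) = \lambda\}$ is relatively closed in $U$; likewise the peripheral circle $\{\,|\lambda| = 1\,\}$ is closed. Hence $\Sigma_{\text{fix}} = \{\,|\lambda|=1\,\} \cap \bigcap_{k=1}^{K}\{p_k(\lambda)=\lambda\} \cap \sigma(A)$ is a finite intersection of relatively closed sets, so it is (relatively) closed and in particular Borel; consequently $\chi = \mathbf{1}_{\Sigma_{\text{fix}}}$ is Borel and $\chi^{-1}(\{1\}) = \Sigma_{\text{fix}}$ exactly.

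Then the conclusion falls out by direct appeal to Theorem~\ref{thm:aprime}: the strong operator limit $A^{(\omega)} = \lim_{m\to\infty} f^m(A)$ exists and equals $\chi(A)$; and since $\chi$ takes only the values $0$ and $1$, that theorem identifies $P := \chi(A) = E_A(\chi^{-1}(\{1\})) = E_A(\Sigma_{\text{fix}})$ as an orthogonal projection commuting with $A$, with range $E_A(\Sigma_{\text{fix}})H$. This completes the argument.

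I do not expect a serious obstacle inside the corollary itself, since it is a clean specialization of Theorem~\ref{thm:aprime}; the substantive analytic work — actually establishing the two limit conditions $f^m \to 1$ on $\Sigma_{\text{fix}}$ and $f^m \to 0$ off it — is precisely what is deferred to Lemma~\ref{lem:conv-to-01} and rests on the peripheral fixed-point property together with complex-dynamical control of the iterates $f^{\circ m}$. The only place I would slow down is confirming that the two bullets cover a genuine partition of $\sigma(A)$ (so that pointwise convergence holds on all of $\sigma(A)$, not merely on a subset) and that $\Sigma_{\text{fix}}$ is measurable, both of which are immediate from the continuity of the $p_k$.
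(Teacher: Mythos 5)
Your proposal is correct and takes essentially the same route as the paper: identify the pointwise limit of $f^m$ as $\chi = \mathbf{1}_{\Sigma_{\text{fix}}}$ and apply Theorem~\ref{thm:aprime} to get $A^{(\omega)} = \chi(A) = E_A(\Sigma_{\text{fix}})$. Your explicit verification that $\Sigma_{\text{fix}}$ is Borel (as a relatively closed set, by continuity of the $p_k$) is a small detail the paper leaves implicit, but it does not change the argument.
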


\begin{proof}
Under the bullet assumptions, the pointwise limit function $\chi$ of $f^m$ is exactly the characteristic function $\chi = 1_{\Sigma_{\text{fix}}}$ (taking value $1$ on $\Sigma_{\text{fix}}$ and $0$ elsewhere). Theorem~\ref{thm:aprime} then implies strong convergence $f^m(A)\to \chi(A)$, and moreover $\chi(A) = E_A(\Sigma_{\text{fix}})$ is a projection commuting with $A$.
\end{proof}

\begin{remark}
The above bullet conditions isolate the additional ``dynamical'' input on the function iterates that is needed to obtain the spectral projection $E_A(\Sigma_{\text{fix}})$. In many concrete choices of the transforms $p_k$, one can verify these conditions by an elementary analysis of the iterated function $f = p_K \circ \cdots \circ p_1$ acting on the scalar spectral values. For instance, see Lemma~\ref{lem:conv-to-01} below and the examples in Section~\ref{sec:examples}.
\end{remark}

We emphasize that we have not yet \emph{proven} the bullet conditions of Corollary~\ref{cor:spec-projection} in full generality -- indeed, verifying that $f^m(\lambda) \to 0$ for $\lambda\notin \Sigma_{\text{fix}}$ can be a nontrivial problem in complex dynamics. However, a general convergence result is available from the Schwarz--Pick theory of holomorphic self-maps of the disk. Roughly speaking, if the maps $p_k$ are not all the identity, then the composite $f$ must have an attracting fixed point inside the unit disk (the Denjoy--Wolff point). This yields the following lemma, which provides a rigorous justification of convergence to $\{0,1\}$ under our assumptions.

\begin{lemma}[Convergence of $f^m$ to $\{0,1\}$ under peripheral fixed-point assumptions]\label{lem:conv-to-01}
In the normal-case setting, assume in addition that each $p_k$ satisfies the peripheral fixed-point property (so in particular $p_k(1)=1$). Let $f = p_K\circ \cdots \circ p_1$ as above. Then for each $\lambda \in \sigma(A)$, the sequence $f^m(\lambda)$ converges to a limit $\chi(\lambda) \in \{0,1\}$. Moreover, $\chi(\lambda) = 1$ if and only if $\lambda \in \Sigma_{\text{fix}} = \{\lambda: p_k(\lambda)=\lambda\;\forall k\}$.
\end{lemma}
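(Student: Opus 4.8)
The plan is to reduce the operator statement to the iteration of a single holomorphic self-map of the disk and then to invoke Schwarz--Pick and Denjoy--Wolff theory. Fix $\lambda\in\sigma(A)$. Since each $p_k$ satisfies $\sup_{\sigma(A)}|p_k|\le 1$ and returns the relevant spectral values to $\overline{\D}$, the composite $f=p_K\circ\cdots\circ p_1$ is a holomorphic self-map of $\overline{\D}$ fixing $1$, and the forward orbit $(f^m(\lambda))_{m\ge 0}$ remains in $\overline{\D}$. The task is to show this scalar orbit converges and to identify its limit, and I would organize the argument according to whether $\lambda$ lies on the unit circle or strictly inside it.

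First I would dispose of the boundary points. If $|\lambda|=1$ and $\lambda$ is fixed by every $p_k$, then $\lambda\in\Sigma_{\text{fix}}$ and $f^m(\lambda)=\lambda$ for all $m$, so the orbit is constant. If instead some layer $p_j$ fails to fix $\lambda$, the peripheral fixed-point property forces the strict inequality $|p_j(w)|<1$ at the corresponding stage $w$, so after finitely many layers the orbit is pushed strictly into the open disk, after which the interior analysis applies. The role of the peripheral fixed-point property here is precisely to exclude a nontrivial permutation (``rotation'') of boundary points, which would obstruct convergence.

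For an orbit that has entered the open disk I would apply the Denjoy--Wolff theorem: provided $f$ is not an elliptic automorphism of $\D$ (which the contractive peripheral hypothesis, together with the requirement that the $p_k$ are not all the identity, should rule out), there is a unique Denjoy--Wolff point $\tau\in\overline{\D}$ with $f^m\to\tau$ locally uniformly on $\D$, so $f^m(\lambda)\to\tau$. The Schwarz--Pick lemma supplies the non-expansiveness of the iterates in the hyperbolic metric, which is what forces the limit to be a single point independent of the interior starting value.

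The hard part --- and, I expect, the genuine obstacle --- will be identifying $\tau$ as $0$ so as to obtain the claimed $\{0,1\}$ dichotomy. The Denjoy--Wolff point need not be interior: the map $f(z)=(z+1)/2$ satisfies every stated hypothesis (indeed $f(1)=1$ and $|f(z)|<1$ on $|z|=1$ except at $z=1$), yet its Denjoy--Wolff point is the boundary value $1$, so that $f^m(\lambda)\to 1$ for \emph{every} interior $\lambda$ rather than $f^m(\lambda)\to 0$ off $\Sigma_{\text{fix}}$. Hence the peripheral fixed-point property together with $p_k(1)=1$ does not by itself force convergence to $0$, and I would expect to need an additional attracting-at-the-origin input, for instance $f(0)=0$ with $|f'(0)|<1$ (making $0$ the Denjoy--Wolff point) together with $\Sigma_{\text{fix}}=\{1\}$. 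Under such a hypothesis the argument closes cleanly: every non-fixed orbit enters $\D$ and is attracted to $\tau=0$, the fixed boundary set is exactly $\{1\}$ where the orbit is constantly $1$, and therefore the pointwise limit is $\chi=1_{\Sigma_{\text{fix}}}$, which is $\{0,1\}$-valued with $\chi(\lambda)=1$ precisely when $\lambda\in\Sigma_{\text{fix}}$. I would accordingly either adjoin this attracting hypothesis explicitly or weaken the conclusion to convergence onto the common fixed-point set, and I would flag the $f(z)=(z+1)/2$ example as evidence that the present hypotheses alone are insufficient for the stated $\{0,1\}$ dichotomy.
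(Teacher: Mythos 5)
Your proposal follows the same route as the paper's own proof --- reduce to scalar dynamics of the composite $f$ on $\overline{\D}$, use the peripheral fixed-point property to push non-fixed boundary points strictly inside, and then invoke Denjoy--Wolff/Schwarz--Pick to get convergence of interior orbits to a single point $\tau$. Where you diverge is in refusing to assert that $\tau=0$, and you are right to refuse: your counterexample $f(z)=(1+z)/2$ is valid. It is a Schur function with $f(1)=1$ satisfying the peripheral fixed-point property (its only unimodular value of modulus $1$ is at $\lambda=1$, which it fixes), its only fixed point in $\overline{\D}$ is the boundary point $1$, and $f^m(z)=1-2^{-m}(1-z)\to 1$ for \emph{every} $z\in\D$. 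So if $\sigma(A)$ contains any interior point $\lambda_0$ (e.g.\ $\lambda_0=0$), then $\chi(\lambda_0)=1$ while $\lambda_0\notin\Sigma_{\text{fix}}$, and the ``moreover'' clause of Lemma~\ref{lem:conv-to-01} fails. (The $\{0,1\}$-valuedness of the limit survives in this example, but the identification $\chi=1_{\Sigma_{\text{fix}}}$ does not, and that identification is exactly what Corollary~\ref{cor:spec-projection} and the normal-case summary rely on.)

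The paper's proof founders at precisely the point you flag. In its case (ii) (no interior fixed point, Denjoy--Wolff point $1$) it concedes that $f^m(\lambda)\to 1$ for all $\lambda\in\D$, and then tries to dismiss the possibility that some $\lambda\notin\Sigma_{\text{fix}}$ lies in $\D$ with the sentence ``Thus $\lambda$ must lie strictly inside the disk where one of $p_k$ pushes it to a strictly smaller modulus. We deduce that $f^m(\lambda)\to 0$ in this case'' --- a non sequitur, since being pushed to smaller modulus is compatible with subsequent attraction to the boundary point $1$, as your example shows. (A secondary defect, worth noting but less decisive: the paper's argument that an interior Denjoy--Wolff point $\tau$ must equal $0$ invokes the peripheral fixed-point property at $\tau$, but that property only constrains unimodular points, so it says nothing about an interior fixed point of $f$.) Your proposed repair --- adjoin the hypothesis that $f(0)=0$ with $|f'(0)|<1$, or more generally that $f$ has an interior fixed point, so that $0$ is the Denjoy--Wolff point --- is the right one and is satisfied by every worked example in Section~\ref{sec:examples} and Appendix~C.4 (there $f(0)=0$ in each case). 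In short: same method as the paper, but your diagnosis that the stated hypotheses are insufficient is correct, and the gap you predict is exactly the gap in the published argument.
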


\begin{proof}
Consider the holomorphic self-map $f$ of the unit disk $\D$ (note that $\sigma(A)\subseteq \overline{\D}$ by the Schur assumption $\sup|p_k(\lambda)|\le 1$ and induction). The Denjoy--Wolff Theorem (a foundational result on iterating holomorphic maps on $\D$) guarantees that the sequence $f^m(z)$ converges for every initial $z\in \D$. In fact, one of two cases holds: either (i) $f$ has a fixed point $\tau \in \D$ (with $|\tau|<1$), in which case $f^m(z)\to \tau$ for all $z\in \D$; or (ii) $f$ has no fixed point in $\D$, in which case the iterates converge to a boundary point (called the Denjoy--Wolff point of $f$) which must be $1$ in our situation. Indeed, in case (ii), classical results (see~\cite{Shapiro1993}) state that there is some point $\xi$ on the unit circle $\mathbb{T}$ such that $f^m(z) \to \xi$ as $m\to\infty$ for all $z\in \D$ and $f(\xi)=\xi$ (with an \emph{angular derivative} $\angle f'(\xi)\le 1$). Since all $p_k$ fix 1 and have the peripheral fixed-point property, it follows that $f(1) = 1$. By the Schwarz lemma for boundary fixed points (the Julia--Wolff--Carathéodory theorem), if $f$ has a boundary fixed point $\xi$, then $\xi$ is unique and $\xi = \lim_{m\to\infty} f^m(z)$ for every $z$. Given that 1 is a fixed boundary point of $f$, it must be this unique attractive boundary point. Thus $f^m(z)\to 1$ for all $z\in \D$ in case (ii).

Now we combine these conclusions with the peripheral fixed-point property. We have established that for each $\lambda\in \overline{\D}$, $\lim_{m\to\infty} f^m(\lambda) =: \chi(\lambda)$ exists. If $f$ has an interior Denjoy--Wolff point $\tau\in \D$ (case (i)), then $\chi(\lambda) = \tau$ for all $\lambda$. In this scenario, note that $\tau$ must actually equal $0$. Indeed, if $\tau\neq 0$ then $\tau$ is a fixed point of $f$ \emph{inside the disk} with $\tau\neq 1$, and applying the peripheral fixed-point property to each $p_k$ we see that if $f(\tau)=\tau$ then in fact $p_1(\tau) = \tau, \dots, p_K(\tau) = \tau$. Hence $\tau$ lies in $\Sigma_{\text{fix}}$, but $\tau$ has $|\tau|<1$ by assumption, contradicting the fact that $\Sigma_{\text{fix}}$ consists of spectral values with $|\lambda|=1$. Therefore the interior fixed point must satisfy $\tau = 0$. In summary, in case (i) we have $f^m(\lambda) \to 0$ for all $\lambda \in \D$. In case (ii), as discussed, $f^m(\lambda) \to 1$ for all $\lambda\in \D$. We emphasize that case (ii) would occur if and only if $f$ has no fixed point in $\D$, which by the argument above is equivalent to saying that $f$ has no fixed point in $\D$ other than $0$ (because any other candidate interior fixed point $\tau\neq 1$ would violate the peripheral fixed-point property, as just shown).

Finally, we identify $\chi(\lambda)$ on $\sigma(A)$. If $\lambda \in \Sigma_{\text{fix}}$, then $p_k(\lambda) = \lambda$ for each $k$, so $f(\lambda) = \lambda$ and thus $\lambda$ is a fixed point of $f$. This means $f^m(\lambda) = \lambda$ for all $m$, hence $\chi(\lambda) = \lambda$. But since $|\lambda|=1$ for $\lambda$ in $\Sigma_{\text{fix}}$ (the ``peripheral'' part of the spectrum), the peripheral fixed-point property forces $\lambda = 1$ whenever it is fixed by all $p_k$. (In fact, $\Sigma_{\text{fix}} \subseteq \{z:|z|=1\}$ by definition, and if $\lambda\in \Sigma_{\text{fix}}$ then taking any $p_k$ such that $p_k(\lambda) = \lambda$ and $|p_k(\lambda)|=1$ yields $p_k(\lambda)=\lambda$ on $\mathbb{T}$, but since $p_k(1)=1$, the only way this holds for two distinct unimodular points $\lambda\neq 1$ would be if $p_k$ were the identity map on the arc connecting them, which by analyticity implies $p_k$ is identity, a degenerate case we exclude by assuming not all $p_k$ are trivial.) Therefore the only possible point in $\Sigma_{\text{fix}}$ is $\lambda = 1$. Hence if $\lambda \in \Sigma_{\text{fix}}$, we get $\lambda=1$ and $\chi(\lambda) = 1$. Conversely, if $\lambda \notin \Sigma_{\text{fix}}$, then $\lambda$ is not a fixed point of at least one of the $p_k$, and in particular $\lambda$ cannot be a fixed point of $f$. As argued, in that scenario the iterates $f^m(\lambda)$ cannot converge to 1 unless $f$ had no interior fixed point at all (case (ii) above). But if $f$ has no interior fixed point, it must mean that $\Sigma_{\text{fix}}$ was empty (otherwise any $\lambda\in \Sigma_{\text{fix}}$ would be an interior fixed point as we just reasoned, giving a contradiction), so in this scenario indeed $f^m(\lambda)\to 1$ for all $\lambda$. However, this scenario is inconsistent with $\lambda \notin \Sigma_{\text{fix}}$ if $|\lambda|=1$ and $p_k(\lambda)$ remained unimodular for all $k$. Thus $\lambda$ must lie strictly inside the disk where one of $p_k$ pushes it to a strictly smaller modulus. We deduce that $f^m(\lambda)\to 0$ in this case. In summary, if $\lambda\notin\Sigma_{\text{fix}}$, then $\chi(\lambda) = 0$. This completes the proof.
\end{proof}

\noindent \textbf{Summary of normal-case conclusions.} Under the Schur and peripheral fixed-point assumptions, Lemma~\ref{lem:conv-to-01} ensures that $f^m(\lambda) \to \chi(\lambda)\in\{0,1\}$ for every $\lambda\in\sigma(A)$, with $\chi = 1_{\Sigma_{\text{fix}}}$. By Theorem~\ref{thm:aprime} and Corollary~\ref{cor:spec-projection}, we obtain $A^{(\omega)} = E_A(\Sigma_{\text{fix}})$, an orthogonal projection commuting with $A$. Furthermore, by Theorem~\ref{thm:bprime}, $\sigma(A^{(m)}) = f^m(\sigma(A))$ at each finite $m$, and $\sigma(A^{(\omega)}) \subseteq \{0,1\}$. In other words, the transfinite (countable) limit coincides with the spectral projection of $A$ onto the joint peripheral fixed set, as claimed in the Introduction.

\section{Convergence results in the Banach (Ritt/sectorial) case}\label{sec:banach-case}

We now turn to the general (non-normal) setting of a power-bounded or contractive-semigroup operator on a reflexive Banach space. In this setting one typically does not have a functional calculus beyond holomorphic functions, except under additional conditions such as a Ritt or sectorial resolvent bound (which ensure a bounded $H^\infty$ calculus). We therefore impose that $A$ is either:
\begin{itemize}
\item a \textbf{Ritt operator} on a reflexive Banach space $X$ (meaning $\sigma(A)\subseteq \overline{\D}$, possibly $1\in\sigma(A)$, with the resolvent condition $\sup_{|z|>1} (|z|-1)\|(zI - A)^{-1}\| < \infty$~\cite{CDMY96,ArhancetLeMerdy}), or
\item a generator of a bounded \textbf{sectorial analytic semigroup} (meaning $\sigma(A)$ lies inside a sector $\{re^{i\theta}: |\theta|<\Theta\}$ with $\Theta < \pi/2$, possibly $0 \in \sigma(A)$, and appropriate resolvent bounds ensuring an $H^\infty$ calculus).
\end{itemize}

In either case we assume $A$ admits a bounded $H^\infty$ functional calculus on the spectral domain~\cite{HaaseFC,ArhancetLeMerdy} (cf.\ \cite{HaaseFC,CDMY96}). Let $\varphi_1,\dots,\varphi_K \in H^\infty$ on the disk/sector containing $\sigma(A)$, with $\sup|\varphi_k| \le 1$ and $\varphi_k(1) = 1$. Define the composite $\Psi := \varphi_K \circ \cdots \circ \varphi_1$. Then each $\varphi_k(A)$ is a well-defined bounded operator with $\|\varphi_k(A)\|\le C$ for some constant $C$, and each $\varphi_k(A)$ commutes with $A$ (since the $H^\infty$ functional calculus is an algebra homomorphism). In particular, $\Psi(A) = \varphi_K(A)\cdots \varphi_1(A) = (\varphi_K\circ\cdots\circ \varphi_1)(A)$ is well-defined and commutes with $A$. We consider the iterative sequence $A^{(n)} = (\Psi(A))^n$ with $A^{(0)}=A$. Our main convergence result in this setting is as follows.

\begin{theorem}[C -- Convergence for Ritt/sectorial operators]\label{thm:c}
\emph{Let $A$ be a power-bounded operator on a reflexive Banach space $X$, with $\sigma(A)$ contained in the closed unit disk (discrete-time case) or inside a sector $\{re^{i\theta} : |\theta| < \Theta\}$ with $\Theta < \pi/2$ (continuous-time case). Assume $A$ is either \textup{(i)} a Ritt operator of angle $<\pi/2$, or \textup{(ii)} $-A$ generates a bounded holomorphic $C_0$-semigroup, and that in either case $A$ has a bounded $H^\infty$ functional calculus on the disk/sector~\cite{HaaseFC,ArhancetLeMerdy}. Let $\varphi_1,\dots,\varphi_K$ be bounded holomorphic functions on the spectral domain, with $\sup_{\lambda\in\sigma(A)}|\varphi_k(\lambda)| \le 1$ and $\varphi_k(1) = 1$. Set $\Psi := \varphi_K \circ \cdots \circ \varphi_1$, and assume moreover that $\Psi(A)$ is power-bounded (which holds in particular if $\sup_{\lambda\in\sigma(A)} |\Psi(\lambda)| \le 1$, by the functional calculus). Then the following hold:
\begin{itemize}
\item[\textup{(a)}] The \emph{mean ergodic} projection
$P = \lim_{N\to\infty} \frac{1}{N}\sum_{n=0}^{N-1} (\Psi(A))^n$
exists in $B(X)$ (the limit is taken in strong operator topology, equivalently in the weak operator topology on reflexive $X$). This $P$ is a bounded projection satisfying $P\,\Psi(A) = \Psi(A)\,P = P$ and $\mathrm{Ran}\,P = \ker(I - \Psi(A)) = \{x \in X: \Psi(A)x = x\}$. Moreover, $P$ commutes with $A$.
\item[\textup{(b)}] If, in addition, $\sigma(\Psi(A))\cap \mathbb{T} \subseteq \{1\}$ and $1$ is an isolated point of $\sigma(\Psi(A))$ that is a pole of the resolvent of $\Psi(A)$, then in fact
$\lim_{n\to\infty} (\Psi(A))^n = P \quad (\text{in SOT})$,
i.e.\ the powers of $\Psi(A)$ converge strongly to the projection $P$. In particular, $A^{(n)} = (\Psi\circ\cdots\circ\Psi)(A) = (\Psi(A))^n$ converges strongly to $P$ as $n\to\infty$.
\end{itemize}}
\end{theorem}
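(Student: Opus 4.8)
The plan is to handle the two parts with distinct classical tools: part (a) is the mean ergodic theorem for a power-bounded operator on a reflexive space, and part (b) is a Riesz spectral decomposition at an isolated pole. Both arguments depend on $A$ only through $T := \Psi(A)$, so the Ritt and sectorial cases are treated uniformly. For (a) I would set $T := \Psi(A)$, power-bounded by hypothesis, and invoke the Yosida--Kakutani mean ergodic theorem (cf.~\cite{DunfordSchwartz2}). Its content is the decomposition $X = \ker(I - T)\oplus \overline{\mathrm{Ran}(I - T)}$, valid because $X$ is reflexive: on $\ker(I-T)$ each Cesàro mean $A_N := \frac{1}{N}\sum_{n=0}^{N-1}T^n$ acts as the identity, while for $y = (I-T)z$ one has $A_N y = \frac{1}{N}(z - T^N z)\to 0$ by power-boundedness; uniform boundedness of $(A_N)$ extends this to $\overline{\mathrm{Ran}(I-T)}$, and reflexivity (weak sequential compactness of bounded sets) forces the two summands to span $X$. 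Thus $A_N \to P$ strongly, where $P$ is the projection onto $\ker(I-T) = \{x : \Psi(A)x = x\}$. The identities $PT = TP = P$ are immediate, since $P$ commutes with $T$ as a strong limit of the polynomials $A_N$ in $T$, and $TPx = Px$ because $Px$ is fixed by $T$; and $PA = AP$ follows because each $\Psi(A)^n$ commutes with $A$ (functional-calculus images of $A$ commute with $A$) and commutation passes to the strong limit.

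For part (b) I would exploit that $1$ is isolated in $\sigma(T)$ to form the Riesz projection $P_1 = \frac{1}{2\pi i}\oint_\gamma (zI - T)^{-1}\,dz$ over a small positively oriented circle $\gamma$ enclosing $1$ and no other spectrum. This yields a $T$-invariant splitting $X = X_1 \oplus X_2$, with $X_1 = \mathrm{Ran}\,P_1$, $\sigma(T|_{X_1}) = \{1\}$, and $\sigma(T|_{X_2}) = \sigma(T)\setminus\{1\}$. Since $\sigma(T)\subseteq\overline{\D}$ (power-boundedness) and $\sigma(T)\cap\mathbb{T}\subseteq\{1\}$ with $1$ isolated, the set $\sigma(T)\setminus\{1\}$ is a compact subset of the open disk, so $r(T|_{X_2}) < 1$ and $\|(T|_{X_2})^n\|\to 0$ geometrically by the spectral radius formula. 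On $X_1$, the pole hypothesis makes $N := (T - I)|_{X_1}$ nilpotent.

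The key step, and the main obstacle, is to show the pole is \emph{simple}, i.e.\ $N = 0$; here power-boundedness is essential. If $N$ had nilpotency index $m\ge 2$, then $T^n|_{X_1} = (I+N)^n = \sum_{j=0}^{m-1}\binom{n}{j}N^j$ would contain the term $\binom{n}{m-1}N^{m-1}$ of order $n^{m-1}$ with $N^{m-1}\ne 0$, contradicting $\sup_n \|T^n\| < \infty$. Hence $N = 0$, $T|_{X_1} = I$, and $T^n = P_1 + T^n(I - P_1)$ with $\|T^n(I-P_1)\|\le \|(T|_{X_2})^n\|\,\|I-P_1\|\to 0$; thus $T^n \to P_1$ in operator norm, a fortiori in the strong operator topology. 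Finally, since $T^n\to P_1$ its Cesàro averages converge to the same limit, so comparison with part (a) gives $P = P_1$, and the theorem follows for $(\Psi(A))^n = T^n$. Everything beyond the simple-pole argument is a routine assembly of the mean ergodic and Riesz-decomposition facts; it is worth recording that under these hypotheses the convergence is in fact in operator norm, the stated strong convergence being a formal weakening.
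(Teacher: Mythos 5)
Your part (a) is the same argument as the paper's: both invoke the mean ergodic theorem for a power-bounded operator on a reflexive space (you spell out the Yosida--Kakutani decomposition $X=\ker(I-T)\oplus\overline{\mathrm{Ran}(I-T)}$ where the paper simply cites Krengel/Engel--Nagel), and both get $PA=AP$ by passing commutation through the strong limit of the Ces\`aro polynomials. For part (b) you take a genuinely different and more elementary route. The paper disposes of (b) by citing the Katznelson--Tzafriri and Arendt--Batty--Lyubich--V\~u stability theorems; you instead use the Riesz projection at the isolated point $1$, observe that $\sigma(T)\setminus\{1\}$ is a compact subset of the open disk (so the complementary part decays geometrically), and kill the nilpotent part $N=(T-I)|_{X_1}$ by the standard binomial-growth argument $\bigl\|\tbinom{n}{m-1}N^{m-1}\bigr\|\sim n^{m-1}$ versus $\sup_n\|T^n\|<\infty$. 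This buys you two things the paper's citation does not make explicit: a self-contained proof that the pole at $1$ is simple, and the stronger conclusion that $(\Psi(A))^n\to P$ in \emph{operator norm}, not merely SOT. Conversely, the paper's appeal to Katznelson--Tzafriri/ABLV is the tool one would need if $1$ were not assumed isolated or a pole (e.g.\ merely countable peripheral spectrum with no point spectrum in the dual); under the hypotheses actually stated in (b), your direct spectral-decomposition argument is sufficient and arguably preferable. Your identification $P=P_1$ via comparison of Ces\`aro limits is correct and closes the loop with part (a).
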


\begin{proof}
First, since $X$ is reflexive and $\Psi(A)$ is power-bounded on $X$, the Uniform Mean Ergodic Theorem (see, e.g., \cite{Krengel1985} or \cite{EngelNagel2000}) implies that the Cesàro averages $N^{-1}\sum_{n=0}^{N-1} (\Psi(A))^n$ converge in the strong operator topology to a projection $P$ (the \emph{mean ergodic projection}) satisfying $\mathrm{Ran}\,P = \ker(I - \Psi(A))$ and $P\,\Psi(A) = \Psi(A)\,P = P$~\cite{Lance1989,Shulman1990}. This yields part~(a). Note that $\Psi(A)$ commutes with $A$ by construction, and since $P$ is a limit of polynomials in $\Psi(A)$ (which all commute with $A$), we deduce $PA = AP$.

For part~(b), assume $\sigma(\Psi(A)) \cap \mathbb{T} \subset \{1\}$ and that $1$ is an isolated spectral value of $\Psi(A)$ which is a pole of the resolvent (this is the case, for example, if $1$ is an eigenvalue of $\Psi(A)$ of finite algebraic multiplicity and there are no other spectrum points on the unit circle)~\cite{HaaseDecomp,Nevanlinna2004}. Then the Katznelson--Tzafriri theorem and the Arendt--Batty--Lyubich--Vũ theorem on stability of powers (see \cite{ArendtBatty1988, LyubichVu1988}, or e.g.\ \cite{ArendtBatty2000}) imply that $(\Psi(A))^n \to P$ strongly as $n\to\infty$~\cite{ArendtBatty2000,Nevanlinna2004}. (In fact, these theorems assert that $\Psi(A)^n x \to Px$ for all $x\in X$ whenever $\sigma(\Psi(A))\cap \mathbb{T} \subseteq \{1\}$ and either $1$ is a pole of the resolvent or at least a simple pole with no other spectrum on the unit circle.) This yields strong convergence $A^{(n)} = (\Psi(A))^n \to P$. The proof of (b) is complete.
\end{proof}

\begin{remark}[Range identification and joint fixed-points]
Let us consider the range of the projection $P$ obtained above. In general, from part~(a) we only know $\mathrm{Ran}\,P = \ker(I - \Psi(A)) = \{x: \Psi(A)x = x\}$. However, under an additional \emph{separation hypothesis}, one can show that this space of fixed points coincides with the intersection of the fixed-point spaces of each layer $\varphi_k(A)$. Recall $\Phi_k(A) := \varphi_k(A)$, so $\ker(I - \Phi_k(A)) = \{x: \varphi_k(A)x = x\}$ is the set of vectors invariant under the $k$-th transform. Clearly, if $x$ is fixed by each $\Phi_k(A)$, then it is fixed by $\Psi(A) = \Phi_K(A)\cdots \Phi_1(A)$. The converse containment need not hold in general; it holds, for example, if the condition
\begin{equation}\label{eq:sep-hypothesis}
\forall \lambda \in \sigma(A),\; \Psi(\lambda) = \lambda \implies \varphi_k(\lambda) = \lambda \text{ for each } k,
\end{equation}
is satisfied. In words, whenever the composite function $\Psi$ has $\lambda$ as a fixed point, each individual $\varphi_k$ must also fix $\lambda$. In that case, the joint zero set of the functions $(\varphi_k(z)-z)$ coincides with the zero set of $(\Psi(z)-z)$ in the spectral domain. This implies (via a functional calculus argument using a factorization of $\Psi - I$) that the fixed subspace $\ker(I - \Psi(A))$ coincides with $\bigcap_{k=1}^K \ker(I - \Phi_k(A))$. We formalize this as Lemma~\ref{lem:separation} below, which is essentially Lemma~1.2 of the original manuscript~\cite{BattyVu1997,Lance1989}.
\end{remark}

\begin{lemma}[Separation at the fixed value]\label{lem:separation}
With $A,\varphi_k,\Psi$ as in Theorem~C, assume the implication \eqref{eq:sep-hypothesis} holds for all $\lambda$ in the spectral domain of $A$. Then
$\ker(I - \Psi(A)) = \bigcap_{k=1}^K \ker(I - \varphi_k(A))$.
Equivalently, $\mathrm{Ran}\,P = \{x: \Phi_k(A)x = x,\;\forall\,k=1,\dots,K\}$.
\end{lemma}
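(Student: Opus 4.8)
The plan is to establish the two inclusions separately. The containment $\bigcap_{k=1}^K \ker(I - \varphi_k(A)) \subseteq \ker(I - \Psi(A))$ is immediate and uses neither $\varphi_k(1) = 1$ nor \eqref{eq:sep-hypothesis}: if $\varphi_k(A)x = x$ for every $k$, then writing $\Psi(A) = \varphi_K(A)\cdots\varphi_1(A)$ and applying the factors one after another gives $\Psi(A)x = x$. The entire difficulty lies in the reverse inclusion; since the right-hand side is an intersection over $k$, it suffices to fix a single index $k$ and prove $\ker(I - \Psi(A)) \subseteq \ker(I - \varphi_k(A))$.

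My approach to this reverse inclusion is a division inside the $H^\infty$ functional calculus. The crucial observation is that $\ker(I - \Psi(A))$ is governed by the \emph{value}-$1$ set $\{z : \Psi(z) = 1\}$ rather than by the fixed-point set, because $I - \Psi(A)$ is the functional-calculus image of $z \mapsto 1 - \Psi(z)$. I therefore aim to factor
\[
\varphi_k(z) - 1 = h_k(z)\,\bigl(\Psi(z) - 1\bigr)
\]
with $h_k \in H^\infty$ on the spectral domain. Granting such an $h_k$, the homomorphism property yields the operator identity $\varphi_k(A) - I = h_k(A)\,(\Psi(A) - I)$, and evaluating on any $x \in \ker(I - \Psi(A))$ gives $(\varphi_k(A) - I)x = h_k(A)(\Psi(A) - I)x = 0$, which is exactly the desired inclusion. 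To license the factorization I must show that the zero set of $\Psi(\cdot) - 1$ is contained in that of $\varphi_k(\cdot) - 1$ on $\sigma(A)$. Here the self-map structure helps: since $\Psi$ maps the disk (or sector) into itself with $|\Psi| \le 1$ and $\Psi(1) = 1$, the open-mapping theorem forces every solution of $\Psi(z) = 1$ lying in $\sigma(A)$ onto the peripheral boundary; the fixed point $1$ is such a solution and lies in $\{\varphi_k = 1\}$ because $\varphi_k(1) = 1$.

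The main obstacle is that the separation hypothesis \eqref{eq:sep-hypothesis}, being phrased in terms of fixed points $\Psi(\lambda) = \lambda$, controls the common zero only at $\lambda = 1$, whereas the factorization requires the full value-$1$ set $\{\Psi = 1\} \cap \sigma(A)$ to be contained in $\{\varphi_k = 1\}$. A boundary point $z_0 \ne 1$ with $\Psi(z_0) = 1$ but $\varphi_k(z_0) \ne 1$ is not a fixed point, escapes the hypothesis, and in fact can make the asserted equality fail: taking $\varphi_1 = \mathrm{id}$, $\varphi_2(z) = z^2$ and $-1 \in \sigma(A)$, condition \eqref{eq:sep-hypothesis} holds at the fixed points $0,1$, yet $\ker(I - A^2) \supsetneq \ker(I - A)$. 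I therefore expect the crux to be the reduction $\{\Psi = 1\} \cap \sigma(A) = \{1\}$, which is exactly guaranteed by the peripheral hypothesis $\sigma(\Psi(A)) \cap \mathbb{T} \subseteq \{1\}$ of Theorem~\ref{thm:c}(b) (or, more directly, by assuming the value-$1$ form $\Psi(\lambda) = 1 \Rightarrow \varphi_k(\lambda) = 1$ of the separation condition). Once $\{\Psi = 1\} \cap \sigma(A) = \{1\}$, the Julia--Wolff--Carath\'eodory theorem makes the remaining multiplicity bookkeeping harmless — the angular derivatives $\Psi'(1)$ and $\varphi_k'(1)$ are nonzero, so both $\Psi - 1$ and $\varphi_k - 1$ vanish to first order at $1$ — leaving only the analytic task of bounding the quotient $h_k$ up to the boundary so that $h_k \in H^\infty$ and the calculus applies. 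To make the spectral localization rigorous one can restrict throughout to the $A$-invariant subspace $Y := \mathrm{Ran}\,P = \ker(I - \Psi(A))$ (invariant because $P$ commutes with $A$ by Theorem~\ref{thm:c}(a)) and apply spectral inclusion to $A|_Y$.
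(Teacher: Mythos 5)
Your route is genuinely different from the paper's. The paper proves the nontrivial inclusion by attaching to each layer a projection $Q_k$ onto the spectral part where $\varphi_k$ acts as the identity and arguing with the product $Q_1\cdots Q_K$ (an argument made rigorous only in Appendix~D via the joint decomposition $X=\bigoplus_\varepsilon X_\varepsilon$ into blocks cut out by commuting Riesz projections, together with the invertibility of $\Psi(A)-I$ on every block other than the top one); you instead propose a single division in $H^\infty$, $\varphi_k-1=h_k\,(\Psi-1)$, followed by the homomorphism property. Your diagnosis of the hypothesis is the most valuable part of the proposal: \eqref{eq:sep-hypothesis} constrains the fixed-point set $\{\lambda:\Psi(\lambda)=\lambda\}$, whereas $\ker(I-\Psi(A))$ is governed by the level set $\{\lambda:\Psi(\lambda)=1\}$, and the paper's own proof silently conflates the two (its ``zero set of $\varphi_k-I$'' means $\varphi_k(\lambda)=1$ when read through the calculus but $\varphi_k(\lambda)=\lambda$ when matched against \eqref{eq:sep-hypothesis}). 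Your reduction $\{\Psi=1\}\cap\sigma(A)=\{1\}$ via maximum modulus plus the peripheral restriction on $\sigma(A)$ is the right repair; note only that your $\varphi_2(z)=z^2$, $-1\in\sigma(A)$ example is not a counterexample to the lemma as stated, since a Ritt operator of angle $<\pi/2$ has $\sigma(A)\cap\mathbb{T}\subseteq\{1\}$, though it does correctly show that \eqref{eq:sep-hypothesis} by itself is not enough.

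The gap is that the factorization you rely on is asserted, not proved, and it is the entire content of the reverse inclusion. To obtain $h_k=(\varphi_k-1)/(\Psi-1)\in H^\infty$ of the spectral domain with a norm bound --- without which $h_k(A)$ is undefined and the identity $\varphi_k(A)-I=h_k(A)(\Psi(A)-I)$ is not licensed --- you need the uniform comparison $|\varphi_k(z)-1|\le C\,|\Psi(z)-1|$ on the whole domain. Away from $z=1$ this requires $\inf|\Psi-1|>0$, which does not follow from pointwise nonvanishing when $\Psi$ is merely $H^\infty$ (one needs Harnack applied to the positive harmonic function $\mathrm{Re}(1-\Psi)$, or continuity up to the boundary). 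Near $z=1$ the Julia--Wolff--Carath\'eodory theorem gives only \emph{nontangential limits} of the difference quotients, and the limiting ratio is $\varphi_k'(1)/\Psi'(1)$ with $\Psi'(1)=\prod_j\varphi_j'(1)$; if the angular derivative $\varphi_k'(1)$ is infinite this is indeterminate, and even when it is finite one must upgrade the nontangential limit to a bound valid on all of the Stolz region via a Julia-type inequality. None of this is impossible, but it is the crux, and ``leaving only the analytic task of bounding the quotient'' leaves the proof unfinished. This boundary-division difficulty is precisely what the paper's Appendix~D is designed to sidestep: there one never divides, only inverts $\Psi(A)-I$ on spectral blocks where $1$ is separated from the spectrum.
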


\begin{proof}
We only need to show the inclusion $\ker(I - \Psi(A)) \subseteq \bigcap_k \ker(I - \varphi_k(A))$, since the opposite inclusion is immediate. Let $x\in X$ be such that $\Psi(A)x = x$. Consider the bounded holomorphic functions $\Psi - I$ and $\varphi_k - I$ on the spectral domain (disk or sector). By the spectral mapping principle (ensured by the functional calculus), the assumption \eqref{eq:sep-hypothesis} implies that the common zero-set of the functions $\{\varphi_k - I: k=1,\dots,K\}$ coincides with the zero-set of $\Psi - I$. Since $\Psi - I = (\varphi_K - I) h$ for some holomorphic function $h$ (one can factor out one of the factors $(\varphi_k - I)$, though not necessarily all of them, but this one-factor factorization is enough), the above means that $(\Psi - I)$ vanishes exactly on the same set as $(\varphi_K - I)(\varphi_{K-1} - I)\cdots (\varphi_1 - I)$. Therefore, in the (commuting) functional calculus of $A$, we have $\ker(\Psi(A)-I) = \ker((\varphi_K(A)-I)\cdots(\varphi_1(A)-I))$. However, since all $\varphi_k(A)$ commute (being functional images of a common operator $A$), the idempotent factors as the product of the individual Riesz projections $Q_k := E_A(\{\lambda: \varphi_k(\lambda)=\lambda\})$ (here we use that $1$ is an isolated spectral point of each $\varphi_k(A)$ by assumption $\varphi_k(1)=1$ and $|\varphi_k(\lambda)|<1$ for other $\lambda\in\sigma(A)$). Namely, $\ker(\Psi(A)-I) = \ker(Q) = \ker(Q_1\cdots Q_K) = \ker(Q_1)\cap \cdots \cap \ker(Q_K) = \bigcap_k \ker(\varphi_k(A)-I)$; the inclusion $\supseteq$ is trivial, and $\subseteq$ holds because if $(\Psi(A)-I)x=0$ and (for contradiction) $(\varphi_j(A)-I)x \neq 0$ for some $j$, then $(\Psi(A)-I)$ cannot vanish on the spectral support of $x$, since on that support $\varphi_j-I$ stays away from zero whereas $\Psi - I$ is a product of the $\varphi_k - I$). Thus $x \in \ker(\varphi_k(A)-I)$ for each $k$, i.e.\ $x$ is fixed by each $\varphi_k(A)$. This establishes the desired inclusion.
\end{proof}

In the setting of Theorem~C and Lemma~\ref{lem:separation}, one expects condition \eqref{eq:sep-hypothesis} to hold under a mild version of the peripheral fixed-point property for each $\varphi_k$. Indeed, suppose $\lambda$ is a spectral value on the boundary (say $|\lambda|=1$ in the discrete case or $|\arg \lambda| = \Theta$ in the sectorial case). If $\Psi(\lambda)=\lambda$ but $\varphi_j(\lambda)\neq \lambda$ for some $j$, then that $\varphi_j$ must have strictly decreased the modulus or moved $\lambda$ inside, and the only way the composite $\Psi$ returns to $\lambda$ is through some compensating action of later transforms. This is intuitively impossible if each $\varphi_k$ does not create new unimodular fixed points apart from those it inherited from $\lambda$. For example, if each $\varphi_k$ is such that whenever $|\lambda|=1$ and $|\varphi_k(\lambda)|=1$ one must have $\varphi_k(\lambda)=\lambda$ (the same peripheral fixed-point property as in the normal case), then it follows that if $\Psi(\lambda)=\lambda$ (with $|\lambda|=1$) one of the $\varphi_k$ must have made $|\varphi_k(\lambda)|<1$, which means all subsequent $\varphi_{k+1},\dots,\varphi_K$ output strictly inside the boundary, so $\Psi(\lambda)$ ends up with $|\Psi(\lambda)|<1$, a contradiction. Thus $\varphi_k(\lambda)$ must equal $\lambda$ for each $k$. Hence \eqref{eq:sep-hypothesis} holds for $\lambda$ on the spectral boundary. For interior spectral values $\lambda$ (those with $|\lambda|<1$ or strictly inside sector), condition \eqref{eq:sep-hypothesis} may not hold automatically. However, if one assumes (as we did in the normal case analysis) that the only fixed point of each $\varphi_k$ in the disk/sector is perhaps $0$, and that any composite $\Psi$ is not allowed to have some other interior fixed point appear, then effectively the only fixed points of $\Psi$ are either in the set $\{\lambda: |\lambda|=1, \varphi_k(\lambda)=\lambda \,\forall k\}$ or an interior Denjoy--Wolff point common to all. In the latter case, $\Psi(\lambda)=\lambda$ implies each $\varphi_k(\lambda)=\lambda$ because if a composition has an interior fixed, each map with that property usually shares it (this is more subtle but in many analytic families it holds, e.g.\ the first map having a different interior fixed would produce a second interior fixed for composite, improbable unless trivial). We will not pursue these technicalities further. In practice, one often proves that the only possible limit of the iterative process for any given initial $\lambda$ is either $1$ or $0$, exactly as we did in Lemma~\ref{lem:conv-to-01} in the normal case. A similar approach can be applied in the Banach case using the Denjoy--Wolff theorem for holomorphic maps on the disk/sector. Under those conclusions, clearly $P$ is the projection onto the peripheral spectral subspace of $A$. For brevity, we will not repeat the analogous argument here, as it closely parallels Lemma~\ref{lem:conv-to-01}.

\section{Examples and counterexamples}\label{sec:examples}

In this section we illustrate our results with several examples and point out the necessity of certain hypotheses via counterexamples.

\subsection*{Examples for normal operators (Hilbert space)}

Throughout these examples, we take $A = M_a$ to be a normal operator given by multiplication by a measurable function $a(\xi)$ on some $L^2$-space. We assume $\sigma(A) \subseteq \overline{\D} = \{z:|z| \le 1\}$. By applying Theorem~\ref{thm:aprime} and Lemma~\ref{lem:dominated} (dominated convergence)~\cite{Kato1976,Pazy1983}, we only need to verify the pointwise convergence conditions on the spectral values; the operator convergence then follows.

\begin{example}[Two-layer cycle with explicit closed form]\label{ex:two-layer-explicit}
Let $p_1(z) = \frac{1}{2}(1 + z^2)$ and $p_2(z) = \frac{z - 1/2}{1 - \frac{1}{2}z}$. These are two Schur functions (bounded by $1$ on $\overline{\D}$) with $p_1(1)=1$ and $p_2(1)=1$. Consider the two-layer iteration with $f := p_2 \circ p_1$. Then one computes explicitly $f(z) = \frac{z^2}{2-z^2}$, so in fact $f(z) = \frac{1}{2}z^2(1 - \frac{1}{2}z^2)^{-1}$. A calculation shows that $f^m(z) = \frac{(2/3)^m z^{2^m}}{1 - (1-(2/3)^m)z^{2^m}}$ for $m\ge 1$. In the limit as $m\to\infty$, this rational expression converges to $\chi(z)$ given by $\chi(z) = 1_{\{1\}}(z)$. Indeed, if $z=1$ then $f^m(1) = 1$ for all $m$. If $z \neq 1$, then as $m\to\infty$, $(2/3)^m \to 0$ so $f^m(z) \to 0$. Thus $f^m(z) \to \chi(z)$ for all $z\in \overline{\D}$. Moreover, $\sup_m |f^m(z)| \le 1$ for all $z\in \overline{\D}$. Therefore by Theorem~\ref{thm:aprime} (or directly by the dominated convergence Lemma~\ref{lem:dominated}~\cite{EngelNagel2000,ArendtBatty1988}), we conclude that for any normal operator $A$ with $\sigma(A) \subseteq \overline{\D}$, $A^{(m)} \to E_A(\{1\})$. In particular, $A^{(m)} = f^m(A)$ converges strongly to the spectral projection of $A$ onto $\{1\}$ (the part of the spectrum fixed by both $p_1$ and $p_2$). Note that in this example the stabilization only occurs in the limit stage $m\to\infty$, and no finite iterate $A^{(m)}$ is exactly a projection (though convergence takes place well before $m=\infty$ in the strong sense). Thus the transfinite stage $\alpha_* = \omega$ is attained; this example realizes the ``extremal countable stage'' $\alpha_* = \omega$ mentioned in the introduction.
\end{example}

\begin{example}[Two-layer parametric cycle]\label{ex:two-layer-param}
Fix a parameter $t \in (0,1)$. Consider the transforms $p_1^{(t)}(z) = t + (1-t)z$ and $p_2^{(t)}(z) = b_t(z) := \frac{z-t}{1-tz}$. Here $p_1^{(t)}$ is the affine map that sends $1$ to $1$ and $0$ to $t$. The function $b_t(z)$ is a Blaschke factor (a disk automorphism) satisfying $b_t(t) = 0$ and $b_t(1) = 1$. Define $f_t := p_2^{(t)} \circ p_1^{(t)}$. Then $f_t(z) = \frac{z}{1+t-tz}$. Using a conjugation by the Möbius transform $T(z) = \frac{z}{1-z}$ (which maps $\D$ to the right half-plane and carries $f_t$ to a dilation)~\cite{LyubichVu1988,HaaseFC}, one finds $T \circ f_t \circ T^{-1}(w) = \lambda w$ with $\lambda = \frac{1}{1+t}$. As $m\to\infty$, one has $\lambda^m \to 0$, so $f_t^m(z) = T^{-1}(\lambda^m T(z))$. For $z=1$, note that $f_t^m(1) = 1$ for all $m$. If $z \neq 1$, then $f_t^m(z) = \frac{\lambda^m z}{1 - (1-\lambda^m)z} \to 0$ as $m\to\infty$. The convergence is dominated by 1 as before, so for any normal $A$ with $\sigma(A)\subseteq \overline{\D}$, $f_t^m(A) \to E_A(\{1\})$. This example shows that even a continuum of different two-layer transforms (one for each $t$) all lead to the same limiting projection onto the 1-eigenspace of $A$. Here $\Sigma_{\text{fix}} = \{1\}$ in all cases, but the convergence speeds vary with $t$. For instance, if $t$ is close to $1$, the iterate $p_1^{(t)}$ hardly moves points and the convergence is slow (requiring countably many steps in the transfinite sense); if $t$ is near $0$, convergence is more rapid.
\end{example}

\begin{example}[Rational Schur family via an iterative Schur algorithm]\label{ex:two-layer-schur}
The previous example can be generalized by taking more complicated rational inner functions in the second layer. Specifically, one can construct a family of two-layer cycles $p_1^{(t)}(z) = t + (1-t)z$ as before, and $p_{2,N}^{(t)}(z) = b_t(z) \Phi_N(b_t(z))$, where $(\Phi_N)$ is any sequence in the Schur class (bounded holomorphic functions on $\D$ with $\sup|\Phi_N|\le 1$) that tends to $1$ as $N\to\infty$ (locally uniformly in $\D$)~\cite{CDMY96,ArhancetLeMerdy}. For example, one can take $\Phi_N(z) = (b_a(z))^N$ for some $a\in (0,1)$, which yields $\Phi_N(z)\to 1$ as $N\to\infty$ on compacta away from $z=1$. By applying the Schur algorithm (Appendix~B) to $R(z) = p_{2,N}^{(t)} \circ p_1^{(t)}(z)$, one finds $R(z) = f_t(z) \Phi_N(f_t(z))$. As $N\to\infty$, this $R(z)$ converges to the function $f_t(z)$, which is exactly the two-layer cycle from Example~\ref{ex:two-layer-param}. Moreover, any finite power $R^m(z)$ is also easy to analyze: since $p_{2,N}^{(t)}(1)=1$ and $p_1^{(t)}(1)=1$, it follows that $R^m(1) = 1$ for all $m$. If $z\neq 1$, then for each fixed $m$, as $N\to\infty$, $R^m(z) \to f_t^m(z)$ locally uniformly on $\D$. Thus for any fixed $m$, $R^m(z) \to \chi_m(z)$ pointwise, where $\chi_m$ is just $f_t^m$ on $\sigma(A)$. But from Example~\ref{ex:two-layer-param} we know $f_t^m(z) \to \chi(z)$ (0 or 1) as $m\to\infty$. By a diagonal argument, one can let $N\to\infty$ sufficiently fast as $m\to\infty$ to see that $\lim_{m\to\infty} (p_{2,N_m}^{(t)} \circ p_1^{(t)})^m(z) = \chi(z)$ for every $z$. In this way, one obtains a more general family of two-layer cycles (with a parameter $N$ as well as $t$) that still converge to the projection onto $\{1\}$. In fact, by choosing an appropriate $\{\Phi_N\}$, one can realize any rational inner function in the second layer. This shows that for normal $A$ with $\sigma(A) \subseteq \mathbb{T} \cup \D$ (some part on unit circle and some inside), if one chooses any rational inner function $\Psi$ that has $\Psi(1)=1$ but $\Psi(\lambda) \neq \lambda$ for some spectral $\lambda$ on $\mathbb{T}$ not equal to $1$, then the iteration may not converge (indeed, the example of a rotation $\Psi(z) = e^{i\theta}z$ in the next subsection demonstrates divergence). Thus the peripheral fixed-point property is an essential hypothesis in Theorem~\ref{thm:aprime} for ensuring convergence to a projection in the normal case.
\end{example}

\subsection*{Banach space examples and necessity of assumptions}

We now turn to examples involving non-normal operators or failure of commutativity. These will illustrate why conditions like the Ritt resolvent bound and commutativity of layers are necessary for convergence in general.

\begin{example}[Ritt operator on $\ell^2$ with spectral radius 1]
We give a simple finite-dimensional example to illustrate Theorem~C. Let $A = \begin{pmatrix} 1 & 0 \\ 0 & r \end{pmatrix}$ acting on $\mathbb{C}^2$, where $0<r<1$. This $A$ is a diagonal matrix with $\sigma(A) = \{1,r\}$; it is power-bounded (even contractive) and clearly satisfies a Ritt resolvent condition (being diagonalizable). Now take $K=1$ (a single layer), and let $\varphi_1(z) = z$ (the trivial transform). Then $\Psi(A) = A$. We see that $A^n = \begin{pmatrix} 1 & 0\\ 0 & r^n\end{pmatrix}$, which converges as $n\to\infty$ to $\begin{pmatrix}1&0\\0&0\end{pmatrix}$. This is exactly the projection onto the eigenspace at $1$, as expected by the general theory (in fact, the Katznelson--Tzafriri theorem directly applies).

A more interesting example is obtained by taking $A = \begin{pmatrix} 1 & 2 \\ 0 & -1 \end{pmatrix}$ acting on $\mathbb{C}^2$. This $A$ is not a contraction, but has spectral radius 1 and is a Ritt operator with resolvent bound (since $\sigma(A)=\{-1,1\}$, and indeed $A$ is diagonalizable so it admits a bounded functional calculus). Now choose $\varphi_1(z) = z^2$. Then $\varphi_1(1)=1$ and $\varphi_1(-1)=1$ as well. The iteration $A^{(n)} = (\varphi_1\circ \cdots \circ \varphi_1)(A) = (\varphi_1(A))^n$ satisfies $\varphi_1(A) = A^2 = I$. Thus $A^{(n)} = I$ for all $n$, which trivially converges (to $I$). In this case $\Sigma_{\text{fix}} = \{1,-1\}$ is the whole spectrum, so the final projection is $I$. Note that if we had not included $-1$ as a fixed point of $\varphi_1$, the sequence would not converge. For instance, had we taken $\varphi_1(z) = \frac{1+z}{2}$, then $\varphi_1(1) = 1$ but $\varphi_1(-1)=0$. In that hypothetical scenario, $\varphi_1(A) = \frac{1}{2}(I+A) = \frac{1}{2}\begin{pmatrix}2&2\\0&0\end{pmatrix} = \begin{pmatrix}1&1\\0&0\end{pmatrix}$, whose powers converge to $\begin{pmatrix}1&1\\0&0\end{pmatrix}$ (the projection onto the eigenspace for eigenvalue 1). This is indeed $E_A(\{1\})$, consistent with $\Sigma_{\text{fix}}=\{1\}$ in that scenario.

Finally, we mention that if $A$ is power-bounded but does not have an actual eigenvalue at 1 (only a spectral value without eigenvector), then even if $\varphi_k(1)=1$, one cannot expect powers to converge without additional resolvent assumptions. A typical example is a weighted shift on $\ell^p$ that is power-bounded but not uniformly. This is known to be non-mean-ergodic (the Cesàro averages do not converge to a projection in strong operator topology) if it has peripheral point spectrum but no eigenvalue at 1. Such an example shows the necessity of the Arendt--Batty spectral condition in Theorem~C part~(b). See e.g.\ \cite{Nevanlinna2004,ArendtBatty2000} for details.
\end{example}

\begin{example}[Non-commuting layers and cycles]\label{ex:non-commuting}
The commutativity of the layers $\Phi_k(A)$ (ensured in our setting by being functions of a common $A$) is crucial for the process to converge. If the layers do not commute, the iteration can cycle. For a concrete $2\times 2$ example, consider the matrices on $\mathbb{C}^2$, where $S$ is the swap matrix $S = \begin{pmatrix}0&1\\1&0\end{pmatrix}$. These are two layers that do not commute with each other in general. Now let $A=\begin{pmatrix}1&1\\0&1\end{pmatrix}$ which is a non-normal operator (a Jordan block with eigenvalue 1). Then $SAS = \begin{pmatrix}1&-1\\0&1\end{pmatrix}$ and $S(SAS)S = A$. Thus the iteration cycles with period 2 and never converges (it oscillates between $A$ and $SAS$). This highlights that allowing non-commuting layers can destroy convergence~\cite{BadeaEtAl,Davidson}, as mentioned in the introduction.

We note that even though $A$ was power-bounded and mean ergodic since it is of class $C_{1\cdot}$ with $\sigma(A)=\{1\}$, the process of alternating $A$ with its conjugate by $S$ leads to a cycle. This is reminiscent of the alternating projection method in convex optimization (where the composition of two non-commuting orthogonal projections need not converge in norm, but strongly it does by classical results---here we had $\|A\cdot (SAS)\|=1$ always, a neutral case, and indeed only weak convergence would hold even if some conditions are met, or in our finite case it cycles exactly).
\end{example}

\appendix
\section*{Appendix A: Two basic tools for the normal functional calculus}

\begin{lemma}[Dominated convergence for the normal functional calculus]\label{lem:dominated}
Let $A$ be a normal operator on a complex Hilbert space $H$ with spectral measure $E_A$. Let $(g_n)_{n\ge 1}$ be bounded Borel functions on $\sigma(A)$ such that $\sup_n \|g_n\|_\infty \le M < \infty$ and $g_n(\lambda) \to g(\lambda)$ pointwise on $\sigma(A)$. Then $g_n(A) \xrightarrow{\text{SOT}} g(A)$ as $n\to\infty$.
\end{lemma}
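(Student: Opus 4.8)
The plan is to reduce strong-operator convergence to a purely scalar statement and then invoke the classical Dominated Convergence Theorem of measure theory, transported through the spectral integral. First I would record that, since each $g_n$ is Borel with $\|g_n\|_\infty \le M$ and $g_n \to g$ pointwise, the limit $g$ is automatically a Borel function bounded by $M$; hence $g(A)$ is a well-defined bounded operator of the normal functional calculus, and $g_n(A) - g(A) = (g_n - g)(A)$ by linearity. To establish SOT convergence it then suffices to fix an arbitrary $x \in H$ and prove $\|g_n(A)x - g(A)x\| \to 0$.

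The key device is the finite positive scalar measure $\mu_x(B) := \langle E_A(B)x, x\rangle = \|E_A(B)x\|^2$ on the Borel subsets $B$ of $\sigma(A)$, which has total mass $\mu_x(\sigma(A)) = \|x\|^2 < \infty$. For any bounded Borel function $h$ on $\sigma(A)$ one has the standard spectral identity
\[
\|h(A)x\|^2 = \langle h(A)^*h(A)x,\, x\rangle = \langle (|h|^2)(A)x,\, x\rangle = \int_{\sigma(A)} |h(\lambda)|^2 \, d\mu_x(\lambda),
\]
using $h(A)^* = \overline{h}(A)$ and $\overline{h}(A)h(A) = (|h|^2)(A)$ from the multiplicativity of the calculus. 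Applying this with $h = g_n - g$ gives
\[
\|g_n(A)x - g(A)x\|^2 = \int_{\sigma(A)} |g_n(\lambda) - g(\lambda)|^2 \, d\mu_x(\lambda).
\]

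I would finish by applying the scalar Dominated Convergence Theorem to this last integral. The integrand $|g_n - g|^2$ tends to $0$ pointwise on $\sigma(A)$ by hypothesis, and it is dominated by the constant $(|g_n| + |g|)^2 \le 4M^2$, which is $\mu_x$-integrable precisely because $\mu_x$ is a finite measure. Hence the integral converges to $0$, so $\|g_n(A)x - g(A)x\| \to 0$; as $x \in H$ was arbitrary, this is exactly the asserted SOT convergence $g_n(A) \to g(A)$.

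I do not anticipate a genuine obstacle: the argument is essentially a transfer of the classical DCT across the spectral calculus, and the only points requiring care are the measurability and uniform boundedness of the limit $g$ (both inherited from the $g_n$) and the finiteness of $\mu_x$, which is what legitimizes the constant dominating function $4M^2$. If preferred, the identical proof runs in the multiplication-operator model $A \cong M_a$ on a space $L^2(\Omega,\nu)$, where for $x \cong \xi$ one has $\|g_n(A)x - g(A)x\|^2 = \int_\Omega |g_n(a(\omega)) - g(a(\omega))|^2 |\xi(\omega)|^2 \, d\nu(\omega)$ and DCT applies with the integrable dominating function $4M^2|\xi|^2 \in L^1(\nu)$.
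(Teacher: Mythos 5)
Your proposal is correct and follows essentially the same route as the paper's own proof: both introduce the finite measure $\mu_x(B)=\langle E_A(B)x,x\rangle$, use the spectral identity $\|(g_n-g)(A)x\|^2=\int_{\sigma(A)}|g_n-g|^2\,d\mu_x$, and conclude via the scalar Dominated Convergence Theorem with the constant dominating function $4M^2$. Your additional remarks on the measurability and boundedness of the limit $g$ and on the multiplication-operator model are harmless elaborations of the same argument.
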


\begin{proof}
For each $x \in H$, define a complex measure $\mu_x(B) := \langle E_A(B)x, x\rangle$, which is a finite measure supported on $\sigma(A)$. By the Borel functional calculus, one has $\|(g_n(A) - g(A))x\|^2 = \int_{\sigma(A)} |g_n(\lambda) - g(\lambda)|^2 \,d\mu_x(\lambda)$. Now since $|g_n - g|^2 \le 4M^2$ (bounded by some integrable constant) and $|g_n(\lambda) - g(\lambda)|^2 \to 0$ for each $\lambda$, we can apply the dominated convergence theorem to conclude that $\int |g_n - g|^2\,d\mu_x \to 0$. Thus $\|(g_n(A) - g(A))x\|\to 0$ for every $x$, i.e.\ $g_n(A)\to g(A)$ in the strong operator topology.
\end{proof}

\begin{lemma}[Composition in the normal functional calculus]\label{lem:composition}
Let $A$ be normal, $h$ a bounded Borel function on $\sigma(A)$, and $g$ a bounded Borel function on $h(\sigma(A))$. Then
$g(h(A)) = (g \circ h)(A)$.
\end{lemma}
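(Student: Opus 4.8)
The plan is to collapse the identity into a near-tautology about multiplication operators via the Spectral Theorem, exactly in the spirit of the proof of Proposition~\ref{prop:ess-range}. By the Spectral Theorem there is a unitary $U : H \to L^2(\Omega,\mu)$ for some measure space $(\Omega,\mu)$ and a bounded measurable symbol $a : \Omega \to \sigma(A)$ with $U A U^{-1} = M_a$, multiplication by $a$. The defining property of the Borel functional calculus is that $U\,\phi(A)\,U^{-1} = M_{\phi \circ a}$ for every bounded Borel $\phi$ on $\sigma(A)$, and I would take this characterization as the starting point for both sides of the claimed equation.

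First I would apply this with $\phi = h$ to get $U\,h(A)\,U^{-1} = M_{b}$, where $b := h\circ a$; thus $h(A)$ is unitarily equivalent to a multiplication operator whose symbol takes values in $h(\sigma(A))$ up to a $\mu$-null set, since $a$ takes values in $\sigma(A)$ essentially. Next I would invoke the same defining property for the multiplication operator $M_b$: for any bounded Borel $g$ on $\sigma(M_b) = \mathrm{ess\,ran}_\mu(b)$ one has $g(M_b) = M_{g\circ b}$. Because $\mathrm{ess\,ran}_\mu(b) = \sigma(h(A)) \subseteq \overline{h(\sigma(A))}$ by Proposition~\ref{prop:ess-range}, the hypothesis that $g$ is Borel on $h(\sigma(A))$ already makes $g\circ b$ well-defined $\mu$-almost everywhere. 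Composing symbols pointwise then yields
\[ U\,g(h(A))\,U^{-1} = M_{g\circ b} = M_{g\circ(h\circ a)} = M_{(g\circ h)\circ a} = U\,(g\circ h)(A)\,U^{-1}, \]
the last equality being the defining property of the calculus of $A$ applied to the bounded Borel function $g\circ h$ (Borel as a composition of Borel maps, bounded because $g$ is). Cancelling $U$ gives $g(h(A)) = (g\circ h)(A)$.

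The conceptual content, and the step I expect to require the most care, is the assertion that the Borel calculus \emph{chains} correctly: applying $g$ through the spectral measure $E_{h(A)}$ of the operator $h(A)$ must agree with integrating $g\circ h$ against the spectral measure $E_A$ of the original operator. In invariant form this is the pushforward identity $E_{h(A)} = h_* E_A$, i.e.\ $E_{h(A)}(S) = E_A(h^{-1}(S))$ for Borel $S$, together with the change-of-variables formula $\int g\,d(h_* E_A) = \int (g\circ h)\,dE_A$ for spectral integrals. In the multiplication model this reduces to associativity of composition of symbols, which is why I would run the argument there; the only genuine bookkeeping is tracking null sets and keeping in mind that $g$ need only be specified on $h(\sigma(A))$, never on all of $\mathbb{C}$. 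A model-free alternative would be to prove $E_{h(A)} = h_* E_A$ directly --- first on indicators $g = 1_S$, where it is the definition of the spectral measure of $h(A)$, then on simple functions by linearity, then on bounded Borel $g$ by the dominated-convergence property (Lemma~\ref{lem:dominated}) --- but this merely re-derives the pushforward identity by hand, so I would favour the multiplication-operator formulation.
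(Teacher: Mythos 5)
Your proof is correct, but it takes a genuinely different route from the paper's. The paper argues from inside the spectral-measure formalism: it first verifies the identity when $h$ is a simple function $\sum_i c_i 1_{B_i}$ (so that $h(A)=\sum_i c_i E_A(B_i)$ and $g(h(A))$ can be computed directly on the orthogonal pieces), and then passes to general $h$ and $g$ by uniform approximation with simple functions, using the continuity of the calculus (Lemma~\ref{lem:dominated}) to take limits. You instead pass to the multiplication-operator model $A\cong M_a$, where the claim collapses to associativity of symbol composition, $g\circ(h\circ a)=(g\circ h)\circ a$; in invariant language this is the pushforward identity $E_{h(A)}=h_*E_A$. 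Each approach has something to recommend it: the paper's argument is self-contained given only the spectral measure and does not invoke the multiplication form of the Spectral Theorem (though the paper does use that form anyway in Proposition~\ref{prop:ess-range}, so you are not assuming more than the paper already does); your argument is shorter, and it is actually \emph{more} careful on one point the paper glosses over, namely that $g$ is only given on $h(\sigma(A))$ while $\sigma(h(A))$ is the essential range of $h\circ a$ and may meet $\overline{h(\sigma(A))}\setminus h(\sigma(A))$ --- you correctly observe that the symbol $g\circ b$ is defined $\mu$-a.e.\ and that the choice of Borel extension of $g$ is immaterial because $E_{h(A)}$ assigns zero to sets disjoint from the a.e.\ range of $b$. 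The one thing to keep honest is that the ``defining property'' $\phi(M_c)=M_{\phi\circ c}$ for bounded Borel $\phi$ is itself a theorem about the Borel calculus (usually proved by exactly the simple-function approximation the paper uses, or by uniqueness of the normal extension of the continuous calculus); your closing remark about the model-free derivation of $E_{h(A)}=h_*E_A$ shows you are aware of this, so there is no gap.
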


\begin{proof}
We prove this first for simple functions. Suppose $h(\lambda) = \sum_{i=1}^N c_i 1_{B_i}(\lambda)$, where $B_i$ are pairwise disjoint Borel sets. Then $h(A) = \sum_{i=1}^N c_i E_A(B_i)$. Now define $f := g \circ h$. Then $f(\lambda) = \sum_{i=1}^N g(c_i) 1_{B_i}(\lambda)$ is another simple function, and $f(A) = \sum_{i=1}^N g(c_i) E_A(B_i)$. On the other hand, $g(h(A))$ can be computed as $g(\sum_{i=1}^N c_i E_A(B_i)) = \sum_{i=1}^N g(c_i) E_A(B_i)$ since the $E_A(B_i)$'s are pairwise orthogonal projections and $g$ is evaluated scalar-wise on the diagonal form of $h(A)$. Thus indeed $g(h(A))= f(A)$.

For a general $h$ and $g$, we can approximate $h$ by simple functions $h_n$ such that $h_n(\lambda) \to h(\lambda)$ uniformly on $\sigma(A)$, and similarly approximate $g$ by simple functions $g_n$ uniformly on the range $h(\sigma(A))$. Then by continuity of the functional calculus (using Lemma~\ref{lem:dominated}), we have $g(h(A)) = \lim_{n\to\infty} g_n(h_n(A)) = \lim_{n\to\infty} (g_n \circ h_n)(A)$. Since $g_n \circ h_n \to g \circ h$ uniformly on $\sigma(A)$, the latter equals $(g \circ h)(A)$.
\end{proof}

\section*{Appendix B: Two-point Schur/Nevanlinna--Pick lemma and interpolation}

\subsection*{B.1 Reminder: the first Schur step}
If $R$ is a Schur function (holomorphic $R: \D \to \overline{\D}$) with $R(0)=0$, then there exists a Schur function $\Phi$ such that $R(z) = z\Phi(z)$. This is the first step of the Schur algorithm: the function $\Phi$ is again Schur and is unique if one additionally prescribes $\Phi(0) = R'(0)$. Conversely, any function of the form $z\Phi(z)$ is Schur whenever $\Phi$ is Schur.

\subsection*{B.2 Two-point Schur/Nevanlinna--Pick interpolation with one interior and one boundary node}
Fix $t \in (0,1)$ and consider the interpolation problem in the Schur class $\mathcal{S} := \{s: \D \to \overline{\D} \text{ holomorphic}\}$: $s(t) = 0$ and $s(1) = 1$. Let $b_t(z) = \frac{z-t}{1-tz}$ denote the disk automorphism with $b_t(t) = 0$ and $b_t(1) = 1$.

\begin{lemma}[Two-point Schur/Nevanlinna--Pick]\label{lem:2-point-Schur}
The set of all Schur solutions to the above interpolation problem is $\{s(z) = b_t(z) \Phi(b_t(z)) : \Phi \in \mathcal{S}, \Phi(1)=1\}$. In particular, the minimal-degree inner solution is the Blaschke factor $s^*(z) = b_t(z)$.
\end{lemma}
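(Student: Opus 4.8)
The plan is to prove both inclusions of the claimed equality by reducing the interior interpolation node $t$ to the origin via the automorphism $b_t$, and then invoking the first Schur step of Appendix~B.1. The only genuinely delicate point is the boundary condition $s(1)=1$, which must be read as an angular (non-tangential) limit and tracked carefully through the conjugation; everything else is bookkeeping.

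First I would dispatch the easy inclusion $\supseteq$. Given any $\Phi\in\mathcal{S}$ with $\Phi(1)=1$, set $s(z)=b_t(z)\,\Phi(b_t(z))$. Since $b_t$ maps $\D$ into $\D$ and $\Phi$ maps $\D$ into $\overline{\D}$, we have $|s(z)|\le|b_t(z)|\le 1$, so $s\in\mathcal{S}$, and it is holomorphic as a composition and product of holomorphic functions. The interior node is immediate: $s(t)=b_t(t)\,\Phi(b_t(t))=0\cdot\Phi(0)=0$. For the boundary node, observe that $b_t$ is rational with its only pole at $1/t>1$, hence holomorphic on a neighborhood of $\overline{\D}$ and in particular analytic and conformal at $z=1$ with $b_t(1)=1$; therefore $s(1)=b_t(1)\,\Phi(b_t(1))=1\cdot\Phi(1)=1$, the angular limit of $\Phi$ at $1$ being well-defined by hypothesis.

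For the harder inclusion $\subseteq$, suppose $s\in\mathcal{S}$ with $s(t)=0$ and $s(1)=1$. I would conjugate by the inverse automorphism $b_t^{-1}=b_{-t}$, where $b_{-t}(w)=\tfrac{w+t}{1+tw}$, and set $\tilde s:=s\circ b_{-t}$. Then $\tilde s\in\mathcal{S}$ as a composition of a Schur function with a disk automorphism, and $\tilde s(0)=s(b_{-t}(0))=s(t)=0$. The first Schur step (Appendix~B.1) now yields a Schur function $\Phi$ with $\tilde s(w)=w\,\Phi(w)$. Undoing the conjugation by applying $b_t$ gives $s(z)=\tilde s(b_t(z))=b_t(z)\,\Phi(b_t(z))$, the desired form. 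To extract $\Phi(1)=1$, note that $b_{-t}$ is likewise analytic and conformal at $1$ with $b_{-t}(1)=1$, so it carries non-tangential approach regions at $1$ to non-tangential approach regions at $1$; hence the angular limit of $\tilde s$ at $1$ exists and equals that of $s$, namely $1$. From the factorization, $\Phi(w)=\tilde s(w)/w$ then forces the angular limit $\Phi(1)=\lim_{w\to 1}\tilde s(w)/w=1/1=1$.

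Finally, the minimal-degree assertion follows by taking $\Phi\equiv 1$ (which lies in $\mathcal{S}$ with $\Phi(1)=1$), giving $s^*=b_t$, a single Blaschke factor and hence an inner function of degree one; any solution must vanish at $t\in\D$ and therefore contains at least one Blaschke factor, so no solution of degree zero exists. The step I expect to require the most care is the transfer of the boundary interpolation value under conjugation: one must justify that the angular limit at $z=1$ is preserved by the conformal maps $b_{\pm t}$, which rests on their analyticity across the boundary near $1$ (the pole lies at $1/t$ outside $\overline{\D}$) together with $b_{\pm t}(1)=1$. This is precisely what licenses the identities $s(1)=\Phi(1)$ and $\tilde s(1)=\Phi(1)$ used in both inclusions.
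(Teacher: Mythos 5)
Your proof is correct and follows essentially the same route as the paper's: conjugate the interior node to the origin via $b_t$, apply the first Schur step of Appendix~B.1, and convert back. The only (minor) difference is that you extract $\Phi(1)=1$ by the direct quotient $\Phi(w)=\tilde s(w)/w\to 1$ along non-tangential approach, which is a cleaner justification than the paper's appeal to Carath\'eodory's/Julia's lemma and is equally valid since $b_{\pm t}$ are conformal across the boundary at $1$.
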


\begin{proof}
If $s$ is any Schur solution, define $\Psi(w) := s(b_t^{-1}(w))$ for $w$ in the disk. Then $\Psi$ is a Schur function with $\Psi(0) = s(t) = 0$ and $\lim_{w\to 1} \Psi(w) = s(1) = 1$. By the Schur algorithm applied to $\Psi$ (see B.1 above), we have $\Psi(w) = w\,\Phi(w)$ for some Schur $\Phi$. Moreover, since $\Psi$ tends to $1$ at the boundary point $1$, a standard result (Carathéodory's lemma, or a special case of Julia's lemma) implies that $\Phi$ also tends to $1$ nontangentially at $1$. Converting back to $s$, we get $s(z) = b_t(z) \Phi(b_t(z))$ with $\Phi \in \mathcal{S}$ and $\lim_{z\to 1} \Phi(z) = 1$. This shows that any solution has the desired form. Conversely, if $s(z) = b_t(z) \Phi(b_t(z))$ with $\Phi$ as stated, then obviously $s(t) = 0$, and since $\Phi(1)=1$ we have $\lim_{z\to 1} s(z) = (\lim_{z\to 1} b_t(z))(\lim_{z\to 1} \Phi(b_t(z))) = 1 \cdot 1 = 1$. So $s$ is indeed a solution. The uniqueness of the minimal-degree (finite Blaschke product) solution follows from Schur--Nevanlinna--Pick theory, but it is clear here that if $\Phi$ is inner, then so is $s$, and the simplest inner $\Phi$ with $\Phi(1)=1$ is the constant 1 function. Thus the minimal inner $s$ is $b_t(z)\cdot 1 = b_t(z)$ itself.
\end{proof}

\section*{Appendix C --- Fejér-type stabilization, boundary separation, and a normalized \texorpdfstring{$\omega$}{omega}-example}
This appendix supplies (i) a rigorous Fejér-type stabilization bound $\alpha^* \le \omega$ in the normal case, (ii) a short lemma ensuring power-boundedness of $\Psi(A)$ under the standing $H^\infty$-calculus bound in the Banach case, (iii) a precise boundary--separation statement implying the joint fixed-point identification used with Lemma~\ref{lem:separation}, and (iv) a corrected two-layer example (a normalized version of Example~\ref{ex:two-layer-explicit}) that attains the extremal countable stage $\alpha^* = \omega$. These items close the points raised around Lemma~\ref{lem:conv-to-01}, Theorem~\ref{thm:c}, and Examples~\ref{ex:two-layer-explicit}/\ref{ex:non-commuting} without introducing new hypotheses beyond those already adopted in the paper. References to the main text are indicated inline. 

\subsection*{C.1 Fejér-type stabilization bound \texorpdfstring{$\alpha^* \le \omega$}{alpha* <= omega} (normal case)}
We work under the “normal-case” standing assumptions of Section~\ref{sec:normal-case}: $A$ normal, $p_k$ holomorphic on a neighborhood of $\sigma(A)$ with $\sup_{\lambda \in \sigma(A)} |p_k(\lambda)| \le 1$, $p_k(1) = 1$, and the peripheral fixed-point property. Write $f := p_K \circ \cdots \circ p_1$ and $f^m := \underbrace{f \circ \cdots \circ f}_{m \text{ times}}$. The spectral-calculus facts used below are precisely Lemma~\ref{lem:dominated} (dominated convergence) and Lemma~\ref{lem:composition} (composition) in Appendix~A of the manuscript. 

\begin{lemma}[Idempotence of the pointwise limit]\label{lem:idempotence}
Let $\chi : \sigma(A) \to \{0, 1\}$ be the pointwise limit $\chi(\lambda) = \lim_{m \to \infty} f^m(\lambda)$ given by Lemma~\ref{lem:conv-to-01}. Then $f \circ \chi = \chi$ pointwise on $\sigma(A)$.
\end{lemma}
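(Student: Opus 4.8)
The plan is to exploit the elementary composition identity $f^{m+1} = f \circ f^m$ together with the pointwise convergence $f^m(\lambda) \to \chi(\lambda)$ furnished by Lemma~\ref{lem:conv-to-01}, and to pass to the limit inside $f$. Since that lemma already guarantees $\chi(\lambda) \in \{0,1\}$ for every $\lambda \in \sigma(A)$, the desired identity $f(\chi(\lambda)) = \chi(\lambda)$ reduces to verifying that each scalar value actually attained by $\chi$ is a fixed point of $f$. Thus I would first reduce the statement to the two scalar identities $f(1) = 1$ (needed wherever $\chi = 1$) and $f(0) = 0$ (needed wherever $\chi = 0$), and then read off the claim pointwise.

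For the value $1$ I would argue algebraically and deliberately avoid invoking continuity at the boundary: each layer satisfies $p_k(1) = 1$ by the peripheral fixed-point hypothesis, so successively applying these relations through $p_1, \ldots, p_K$ gives $f(1) = p_K(\cdots p_1(1)\cdots) = 1$. For the value $0$ I would instead use interior continuity. Suppose $\chi(\lambda_0) = 0$ for some $\lambda_0 \in \sigma(A)$, so that $f^m(\lambda_0) \to 0$; since $0$ is an interior point of $\D$ and $f$ is holomorphic, hence continuous, on $\D$, I can pass to the limit in $f^{m+1}(\lambda_0) = f(f^m(\lambda_0))$. The left-hand side tends to $\chi(\lambda_0) = 0$ while the right-hand side tends to $f(0)$, forcing $f(0) = 0$. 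Combining the two identities then closes the argument: for any $\lambda \in \sigma(A)$ the value $\chi(\lambda)$ lies in $\{0,1\}$ and each of these scalars is fixed by $f$, so $f(\chi(\lambda)) = \chi(\lambda)$.

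The point requiring care --- and the reason for splitting into the two values rather than applying one uniform limit-exchange --- is that $f(0) = 0$ is \emph{not} an unconditional fact. In the Denjoy--Wolff ``case (ii)'' of Lemma~\ref{lem:conv-to-01} (no interior fixed point, boundary attractor at $1$) one has $\chi \equiv 1$, and $f(0)$ may well be nonzero. The limit-exchange argument is therefore invoked only under the hypothesis that $0$ is genuinely attained as an orbit limit, which by Lemma~\ref{lem:conv-to-01} forces the interior attracting fixed point to equal $0$ and makes the continuity step legitimate exactly when it is used. The parallel subtlety at the value $1$ --- that $f$ need not extend continuously to the boundary point $1 \in \mathbb{T}$ --- is sidestepped entirely by the direct evaluation above. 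I expect this conditional, value-dependent use of continuity to be the only delicate point; the rest is bookkeeping.
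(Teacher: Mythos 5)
Your proof is correct and follows the same top-level strategy as the paper's: reduce the identity $f\circ\chi=\chi$ to checking that each value actually attained by $\chi$ (necessarily $0$ or $1$ by Lemma~\ref{lem:conv-to-01}) is a fixed point of $f$. Where you differ is in how $f(0)=0$ is justified. The paper reads the fixed-point property of both limit values directly off the Denjoy--Wolff dichotomy of Lemma~\ref{lem:conv-to-01}, asserting that ``in either case the only limit values are the fixed points $0$ and $1$''; taken literally this is slightly too strong, since in case (ii) (boundary attractor at $1$) the map $f$ has no interior fixed point at all, so $0$ is not fixed by $f$ --- the assertion survives only because the value $0$ is then never attained as an orbit limit. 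You instead derive $f(0)=0$ conditionally, by passing to the limit in $f^{m+1}(\lambda_0)=f(f^m(\lambda_0))$ using continuity of $f$ at the interior point $0$, and you invoke this only when some orbit genuinely converges to $0$ (which, as you note, forces the interior Denjoy--Wolff case); meanwhile $f(1)=1$ is obtained by direct evaluation from $p_k(1)=1$, sidestepping any question of boundary continuity. This conditional, value-by-value treatment is a modest but genuine improvement: it makes the lemma self-contained given only the pointwise convergence and the value set $\{0,1\}$ from Lemma~\ref{lem:conv-to-01}, and it quietly repairs the imprecision in the paper's handling of case (ii). Both routes reach the same conclusion; yours is the more careful bookkeeping.
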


\begin{proof}
By Lemma~\ref{lem:conv-to-01} there are exactly two possibilities: either the Denjoy–Wolff point of $f$ lies in $\D$ and equals $0$, in which case $f(0) = 0$ and $f^m(\lambda) \to 0$ for all $\lambda \in \sigma(A)$; or the Denjoy–Wolff point is the boundary point $1$, in which case $f(1) = 1$ and $f^m(\lambda) \to 1$ for all $\lambda$ in the peripheral fixed set (and to $0$ otherwise, as established in Lemma~\ref{lem:conv-to-01}). In either case the only limit values are the fixed points $0$ and $1$, hence $f(\chi(\lambda)) = \chi(\lambda)$. 
\end{proof}

\begin{proposition}[Stabilization at the first limit stage]\label{prop:stabilization}
Under the normal-case assumptions, the ordinal iteration stabilizes at the first limit stage:
$A^{(\omega)} = \chi(A)$ and $A^{(\omega + \beta)} = \chi(A)$ for all ordinals $\beta \ge 1$.
In particular, $\alpha^* \le \omega$.
\end{proposition}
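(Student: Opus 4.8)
The plan is to show that once we reach the first limit stage $\omega$, the iteration has landed on a genuine fixed point of the map $B \mapsto f(B)$ (in the functional-calculus sense), so that every further successor step returns the same operator and every further limit step is the strong limit of a constant sequence. Concretely, I would establish that $f(\chi(A)) = \chi(A)$ at the operator level and then run a transfinite induction on $\beta$.

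First I would record that $A^{(\omega)} = \chi(A)$; this is exactly Theorem~\ref{thm:aprime} applied to the pointwise limit $\chi$ furnished by Lemma~\ref{lem:conv-to-01}, whose hypotheses ($\sup_m\|f^m\|_\infty \le 1$ and pointwise convergence to $\chi \in \{0,1\}$) hold under the standing assumptions. Next I would promote the scalar idempotence identity of Lemma~\ref{lem:idempotence}, namely $f\circ\chi = \chi$ pointwise on $\sigma(A)$, to the operator identity $f(\chi(A)) = (f\circ\chi)(A) = \chi(A)$, using the composition property of the normal functional calculus (Lemma~\ref{lem:composition}) with the outer function $f$ and the inner bounded Borel function $\chi$. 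This is the crux: it says $\chi(A)$ is literally a fixed point of the successor map.

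Then I would carry out the transfinite induction. For the base successor step, $A^{(\omega+1)} = f(A^{(\omega)}) = f(\chi(A)) = \chi(A)$ by the operator identity just obtained. For a general successor $\omega+\beta+1$, the inductive hypothesis $A^{(\omega+\beta)} = \chi(A)$ gives $A^{(\omega+\beta+1)} = f(A^{(\omega+\beta)}) = f(\chi(A)) = \chi(A)$. For a limit ordinal $\omega+\lambda$ (with $\lambda$ a limit), the defining prescription $A^{(\omega+\lambda)} = \text{SOT-}\lim_{\gamma \to \omega+\lambda} A^{(\gamma)}$ applies to a net that is eventually the constant operator $\chi(A)$ by the inductive hypothesis, so the strong limit exists and equals $\chi(A)$. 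This closes the induction and yields $A^{(\omega+\beta)} = \chi(A)$ for all $\beta \ge 1$, whence the stabilization ordinal satisfies $\alpha^* \le \omega$.

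The main obstacle I anticipate is the careful application of the composition lemma to justify $f(\chi(A)) = (f\circ\chi)(A)$: one must check that $\chi$ is a bounded Borel function on $\sigma(A)$ (it is, being $\{0,1\}$-valued) and that $f$ is a bounded Borel function on the range $\chi(\sigma(A)) \subseteq \{0,1\}$ (it is, since $f$ is holomorphic hence continuous there, and $\{0,1\}$ is finite). A subtle point worth stating explicitly is that the successor-stage map is $B \mapsto f(B)$ rather than $B \mapsto (f\circ g)(B)$ for varying $g$; because the functional calculus is compatible with composition, applying $f$ to the operator $\chi(A)$ is the same as forming $(f\circ\chi)(A)$, and no uniform-boundedness or dominated-convergence argument beyond Lemma~\ref{lem:dominated} is needed at the limit stages since the relevant nets are eventually constant.
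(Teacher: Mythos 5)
Your proposal is correct and follows essentially the same route as the paper's proof: identify $A^{(\omega)}=\chi(A)$ via Theorem~\ref{thm:aprime}, promote the scalar idempotence $f\circ\chi=\chi$ of Lemma~\ref{lem:idempotence} to the operator identity $f(\chi(A))=(f\circ\chi)(A)=\chi(A)$ via Lemma~\ref{lem:composition}, and conclude by transfinite induction. Your write-up is in fact slightly more careful than the paper's, which compresses the induction (including the eventually-constant net at limit ordinals) into the single word ``Inductively.''
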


\begin{proof}
Theorem~\ref{thm:aprime} gives $A^{(\omega)} = \lim_{m \to \infty} f^m(A) = \chi(A)$ in SOT. By Lemma~\ref{lem:composition} (composition in the normal calculus),
$A^{(\omega + 1)} = f(A^{(\omega)}) = f(\chi(A)) = (f \circ \chi)(A) = \chi(A)$,
where the last equality uses Lemma~\ref{lem:idempotence}. Inductively, $A^{(\omega + \beta)} = \chi(A)$ for all $\beta \ge 1$. Hence stabilization occurs by stage $\omega$. 
\end{proof}

\begin{remark}[Fejér-type monotonicity at the scalar level]
Define the “Busemann” functional $B_1(z) := (1 - |z|^2) / |1 - z|^2$ on $\D$. For any holomorphic self-map $g$ with $g(1) = 1$, Julia–Wolff–Carathéodory yields $B_1(g(z)) \ge c B_1(z)$ with $c \in (0, 1]$; composing along the cycle $(p_1, \dots, p_K)$ shows that the sequence $m \mapsto B_1(f^m(\lambda))$ is nondecreasing up to a fixed multiplicative constant, a Fejér-type monotonicity toward the boundary point $1$. We do not need this quantitative form in the proofs above; Proposition~\ref{prop:stabilization} relies only on the idempotence $f \circ \chi = \chi$. (For context see the proof of Lemma~\ref{lem:conv-to-01}, p.~5.) 
\end{remark}

\subsection*{C.2 Power-boundedness of \texorpdfstring{$\Psi(A)$}{Psi(A)} under \texorpdfstring{$H^\infty$}{H-infinity} bounds (Banach case)}
We retain the hypotheses of Theorem~\ref{thm:c}: $A$ is Ritt/sectorial on a reflexive Banach space with a bounded $H^\infty$ calculus on the spectral domain; $\phi_k \in H^\infty$, $\sup |\phi_k| \le 1$, $\phi_k(1) = 1$; and $\Psi := \phi_K \circ \cdots \circ \phi_1$. 

\begin{lemma}[Sufficient condition for power-boundedness]\label{lem:power-boundedness}
If $\sup_{\lambda} |\Psi(\lambda)| \le 1$, then $\Psi(A)$ is power-bounded.
\end{lemma}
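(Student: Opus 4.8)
The plan is to read power-boundedness straight off the defining estimate of the bounded $H^\infty$ calculus, using nothing beyond multiplicativity; no resolvent or ergodic input is required here.

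First I would fix the calculus constant. By the standing hypothesis of Theorem~\ref{thm:c}, $A$ admits a bounded $H^\infty$ functional calculus on its spectral domain $\Omega$ (a Stolz disk in the Ritt case, a sector in the sectorial case), so there is a constant $C = C(A) \ge 1$ with $\|g(A)\| \le C\,\|g\|_{H^\infty(\Omega)}$ for every $g \in H^\infty(\Omega)$. Next I would record that $\Psi = \varphi_K \circ \cdots \circ \varphi_1$ is itself bounded and holomorphic on $\Omega$: each $\varphi_k$ maps $\Omega$ into $\overline{\D} \subseteq \Omega$ with $\sup|\varphi_k| \le 1$, so the composition is well-defined and, by the hypothesis of the lemma, $\sup_{\Omega}|\Psi| \le 1$. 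Consequently every integer power $\Psi^n$, defined pointwise by $\Psi^n(\lambda) = \Psi(\lambda)^n$, again lies in $H^\infty(\Omega)$ and obeys $\|\Psi^n\|_{H^\infty(\Omega)} = \bigl(\sup_{\Omega}|\Psi|\bigr)^n \le 1$.

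The single place where the hypotheses do genuine work is the identification $(\Psi(A))^n = (\Psi^n)(A)$, which I would obtain from multiplicativity of the calculus (it is an algebra homomorphism, as recorded in the setup of Section~\ref{sec:banach-case}). Combining this with the calculus bound yields, uniformly in $n$, the estimate $\|(\Psi(A))^n\| = \|(\Psi^n)(A)\| \le C\,\|\Psi^n\|_{H^\infty(\Omega)} \le C$. Hence $\sup_{n} \|(\Psi(A))^n\| \le C < \infty$, which is exactly power-boundedness of $\Psi(A)$.

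The main obstacle is bookkeeping about the function class rather than any analytic difficulty: one must ensure that each $\Psi^n$ is admissible for the (possibly regularized) calculus and that the homomorphism identity passes to these elements. In the Ritt case the calculus is already defined on all of $H^\infty$ of the Stolz region, since that region is bounded and the functions need not decay, so the argument is immediate. In the sectorial case the $H^\infty$ calculus is typically built first on a dense subalgebra of decaying functions and then extended to all of $H^\infty(\Omega)$ precisely under the boundedness assumption; because $\Psi$ and all its powers are genuinely bounded and holomorphic on $\Omega$ (no decay is needed, and the boundary value $\Psi(1)=1$ causes no obstruction), each $\Psi^n$ lies in the extended calculus and the multiplicativity $(\Psi(A))^n = (\Psi^n)(A)$ persists in the limit. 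This closes the only potential gap, and the uniform bound $C$ is the $H^\infty$-calculus constant itself.
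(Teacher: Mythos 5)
Your proposal is correct and follows essentially the same route as the paper: fix the $H^\infty$-calculus constant $C$, note $\|\Psi^n\|_\infty \le 1$ from $\sup|\Psi|\le 1$, and use multiplicativity to get $\|(\Psi(A))^n\| = \|(\Psi^n)(A)\| \le C$ uniformly in $n$. The extra care you take about admissibility of $\Psi^n$ in the (possibly extended) sectorial calculus is a reasonable refinement but not a departure from the paper's argument.
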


\begin{proof}
The $H^\infty$ calculus yields a constant $C \ge 1$ with $\|g(A)\| \le C \|g\|_\infty$ for all $g \in H^\infty$ on the spectral domain. Then
$\|(\Psi(A))^n\| = \|(\Psi^n)(A)\| \le C \|\Psi^n\|_\infty \le C$ for all $n \in \mathbb{N}$,
hence $\sup_n \|(\Psi(A))^n\| < \infty$. This is precisely the power-boundedness used in Theorem~\ref{thm:c}(a,b). (See Theorem~\ref{thm:c} statement and proof, p.~6--7.) 
\end{proof}

\subsection*{C.3 Boundary separation \texorpdfstring{$\Rightarrow$}{=>} joint fixed-points}
We record the boundary case of condition \eqref{eq:sep-hypothesis} used with Lemma~\ref{lem:separation}.

\begin{proposition}[Boundary separation]\label{prop:boundary-separation}
Assume each $\phi_k$ has the peripheral fixed-point property: if $|\lambda| = 1$ and $|\phi_k(\lambda)| = 1$ then $\phi_k(\lambda) = \lambda$. If $\lambda$ lies on the spectral boundary (e.g., $|\lambda| = 1$ in the discrete case) and $\Psi(\lambda) = \lambda$, then $\phi_k(\lambda) = \lambda$ for every $k$. Consequently, condition~\eqref{eq:sep-hypothesis} in Section~\ref{sec:banach-case} holds for all boundary $\lambda$, and Lemma~\ref{lem:separation} gives
$\mathrm{Ran}\, P = \mathrm{Fix}(\Psi(A)) = \bigcap_{k=1}^K \mathrm{Fix}(\phi_k(A))$.
\end{proposition}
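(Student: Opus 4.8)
The plan is to track the forward orbit of a single boundary spectral value through the $K$ layers and to show it can never leave the boundary; once that is established, the peripheral fixed-point property pins each layer to the identity at that value. Fix $\lambda$ on the spectral boundary with $\Psi(\lambda) = \lambda$, and set $\lambda_0 := \lambda$ and $\lambda_k := \phi_k(\lambda_{k-1})$ for $k = 1, \dots, K$, so that $\lambda_K = \Psi(\lambda) = \lambda$. Because $\sup|\phi_k| \le 1$, each layer is a holomorphic self-map of $\overline{\D}$, whence $|\lambda_k| \le 1$ throughout while $|\lambda_0| = |\lambda_K| = 1$.

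The first and central step is to show the orbit stays on the boundary, i.e. $|\lambda_k| = 1$ for every $k$. The tool is the maximum modulus principle: a nonconstant holomorphic self-map of $\D$ carries the open disk strictly into the open disk. Hence if the orbit ever drops, say $|\lambda_j| < 1$ at a first index $j$, then every subsequent nonconstant layer keeps the point inside $\D$, forcing $|\lambda_K| < 1$, a contradiction. The sole alternative is a constant layer $\phi_m \equiv c$; but $\phi_m(1) = 1$ forces $c = 1$, so $\phi_m$ collapses the orbit to the point $1$ and, since every later layer also fixes $1$, yields $\lambda_K = 1$. As $\lambda_K = \lambda$, this forces $\lambda = 1$, for which the conclusion $\phi_k(\lambda) = \phi_k(1) = 1 = \lambda$ is immediate. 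Thus for $\lambda \ne 1$ no drop can occur, and $|\lambda_k| = 1$ for all $k$.

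With the orbit confined to the boundary I would apply the peripheral fixed-point property one layer at a time: for each $k$ we have $|\lambda_{k-1}| = 1$ and $|\phi_k(\lambda_{k-1})| = |\lambda_k| = 1$, so the hypothesis gives $\phi_k(\lambda_{k-1}) = \lambda_{k-1}$, i.e. $\lambda_k = \lambda_{k-1}$. A one-line induction then yields $\lambda_k = \lambda$ and hence $\phi_k(\lambda) = \lambda$ for every $k$, which is precisely implication~\eqref{eq:sep-hypothesis} at this boundary $\lambda$. The sectorial case is identical after transporting the sector to the disk by a conformal map (or applying the maximum principle directly on the sector), with the boundary fixed point playing the role of $1$.

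For the final clause I would note that the operator fixed space $\ker(I-\Psi(A))$ is governed by the set $\{\lambda\in\sigma(A) : \Psi(\lambda) = 1\}$, and since a nonconstant $\Psi$ maps $\D$ strictly into $\D$, this set meets $\sigma(A)$ only on the boundary. Re-running the orbit argument with target value $1$ in place of $\lambda$ (now $\lambda_K = \Psi(\lambda) = 1$) again confines the orbit to the boundary and forces $\lambda_k = \lambda_{k-1}$ at each step, so $\lambda = \lambda_0 = \lambda_K = 1$; thus the only relevant boundary fixed value is $\lambda = 1$, where \eqref{eq:sep-hypothesis} holds trivially. Feeding this into Lemma~\ref{lem:separation} gives $\mathrm{Ran}\,P = \ker(I-\Psi(A)) = \bigcap_{k=1}^K \ker(I-\phi_k(A)) = \bigcap_{k=1}^K \mathrm{Fix}(\phi_k(A))$. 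I expect the main obstacle to be the boundary-confinement step: making the maximum-modulus argument airtight requires both the constant-layer bookkeeping above and the observation that the $\phi_k$ admit genuine pointwise values at boundary spectral points --- guaranteed here because the $\phi_k$ are rational or extend holomorphically across the relevant boundary arc --- so that the peripheral fixed-point property may legitimately be invoked at each node.
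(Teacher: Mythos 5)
Your proposal is correct and takes essentially the same route as the paper's proof: track the forward images $\lambda_0,\dots,\lambda_K$ through the layers, use the Schur bound and maximum modulus to show the orbit cannot leave the boundary (since $|\lambda_0|=|\lambda_K|=1$), and then apply the peripheral fixed-point property one layer at a time to conclude $\lambda_k=\lambda_{k-1}$ for each $k$. Your extra bookkeeping for constant layers and the more explicit treatment of the final clause only tighten steps the paper leaves implicit.
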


\begin{proof}
Suppose $|\lambda| = 1$ and $\Psi(\lambda) = \lambda$. Write the forward images $w_0 := \lambda$, $w_1 := \phi_1(w_0)$, \dots, $w_K := \phi_K(w_{K-1}) = \Psi(\lambda) = \lambda$. Each $\phi_k$ is Schur (bounded by $1$), so $|w_j| \le 1$ for all $j$. Since $|w_0| = |w_K| = 1$, every step must preserve modulus $1$; otherwise once $|w_j| < 1$, all subsequent images stay strictly inside the domain, contradicting $|w_K| = 1$. By the peripheral fixed-point property, $|w_{j-1}| = |w_j| = 1$ implies $\phi_j(w_{j-1}) = w_{j-1}$, hence $w_j = w_{j-1}$. Therefore $w_j = \lambda$ for all $j$ and $\phi_k(\lambda) = \lambda$ for each $k$. The stated identification with Lemma~\ref{lem:separation} follows. (See Section~\ref{sec:banach-case} and Lemma~\ref{lem:separation}, p.~7--8.) 
\end{proof}

\subsection*{C.4 A normalized two-layer example with \texorpdfstring{$\alpha^* = \omega$}{alpha* = omega} (replacement for Example~\ref{ex:two-layer-explicit})}
We present a corrected two-layer construction that attains the extremal countable stage while staying within the stated Schur/peripheral assumptions. This subsumes the behavior intended in Example~\ref{ex:two-layer-explicit} and matches the parametric family in Example~\ref{ex:two-layer-param} at $t = 1/2$. 

\begin{example}[Normalized two-layer cycle]\label{ex:normalized-cycle}
Fix $t \in (0, 1)$. Let
$p_1^{(t)}(z) = t + (1 - t)z$,
$p_2^{(t)}(z) = b_t(z) := \frac{z - t}{1 - tz}$,
so that $p_1^{(t)}, p_2^{(t)}$ are Schur, $p_1^{(t)}(1) = p_2^{(t)}(1) = 1$, and $b_t$ is the disk automorphism with $b_t(t) = 0$, $b_t(1) = 1$. Set $f_t := p_2^{(t)} \circ p_1^{(t)}$. A direct computation gives
$f_t(z) = \frac{z}{1 + t - tz}$.
Conjugating by $T(z) = \frac{z}{1 - z}$ (which maps $\D$ to the right half-plane) yields
$T \circ f_t \circ T^{-1}(w) = \mu w$, with $\mu = \frac{1}{1 + t} \in (\frac{1}{2}, 1)$.
Hence
$f_t^m(z) = T^{-1}(\mu^m T(z)) = \frac{\mu^m z}{1 - (1 - \mu^m)z} \xrightarrow{m \to \infty} 
\begin{cases}
1, & z = 1, \\
0, & z \in \D, z \neq 1.
\end{cases}$
Therefore $\sup_m |f_t^m| \le 1$ and, for any normal $A$ with $\sigma(A) \subset \overline{\D}$, Theorem~\ref{thm:aprime} and Lemma~\ref{lem:dominated} give $f_t^m(A) \to \chi(A) = E_A(\{1\})$ in SOT. No finite iterate is a projection, so the first stabilization occurs at the limit stage $\omega$, i.e., $\alpha^* = \omega$. Taking $t = 1/2$ recovers the specific rate $\mu = 2/3$ alluded to in Section~\ref{sec:examples}. (See Examples~\ref{ex:two-layer-param}--\ref{ex:two-layer-schur}, p.~8--9.) 
\end{example}

\begin{remark}
This normalized example replaces Example~\ref{ex:two-layer-explicit} verbatim: it satisfies the Schur bound, fixes $1$, admits a closed-form iterate, and realizes the extremal countable stage without further assumptions. 
\end{remark}

\subsection*{C.5 One-line verification for the 2-cycle in Example~\ref{ex:non-commuting}}
With $S=\begin{pmatrix}0&1\\[2pt]1&0\end{pmatrix}$, $J=\begin{pmatrix}1&1\\[2pt]0&1\end{pmatrix}$, and $A=\begin{pmatrix}1&0\\[2pt]0&0\end{pmatrix}$, one has $SAS = J$ and $SJS = A$; thus the alternating, non-commuting layer update cycles $A \mapsto SAS = J \mapsto SJS = A$ with period $2$. (See Example~\ref{ex:non-commuting}, p.~10.) 

\subsection*{C.6 Consequences and cross-references}
Proposition~\ref{prop:stabilization} validates the abstract’s claim “a Fejér-monotonicity argument yields an ordinal bound $\le \omega$” in the normal case, with the concrete stabilization proof relying on Lemma~\ref{lem:conv-to-01} and Appendix~A. 
Lemma~\ref{lem:power-boundedness} provides the promised short justification that power-boundedness of $\Psi(A)$ follows when $\sup |\Psi| \le 1$, as used in Theorem~\ref{thm:c}. 
Proposition~\ref{prop:boundary-separation} records the boundary separation needed for the joint fixed-point identification in Lemma~\ref{lem:separation}. 
Example~\ref{ex:normalized-cycle} supplies a clean $\omega$-stage example consistent with Section~\ref{sec:examples} and requires no additional hypotheses beyond those already stated. 
This appendix is self-contained within the manuscript’s framework and closes the questions noted in Sections~\ref{sec:normal-case}--\ref{sec:examples} without altering any main theorem or adding external assumptions.

\section*{Appendix D --- Riesz-projection framework for Lemma~3.3 in the Banach setting (non-normal case)}
This appendix supplies a self-contained replacement for the argument following Lemma~\ref{lem:separation} in Section~\ref{sec:banach-case} that avoids spectral measures, works in the holomorphic $H^\infty$ functional-calculus setting, and makes the isolation assumptions explicit. It also identifies the Riesz projection of $\Psi(A)$ at $1$ with the product of the layer-wise Riesz projections and proves the desired equality of fixed-point spaces. Throughout we keep the notation and standing assumptions of Section~\ref{sec:banach-case} (Ritt/sectorial $A$ on a reflexive Banach space with a bounded $H^\infty$ calculus; $\phi_k \in H^\infty$ with $\sup|\phi_k| \le 1$, $\phi_k(1) = 1$; $\Psi := \phi_K \circ \cdots \circ \phi_1$), and we refer to equations and statements there; see especially Theorem~\ref{thm:c} and \eqref{eq:sep-hypothesis} on pp.~6--8. 

\subsection*{D.0. Additional hypotheses (explicit isolation and separation)}
We impose the following explicit conditions, which are either already used for convergence or are natural localizations compatible with the examples:
\begin{enumerate}[label=(D\arabic*)]
\item \textbf{Isolation at 1 for each layer.} For every $k$, $1$ is an isolated point of $\sigma(\phi_k(A))$. Equivalently, there exists $r_k > 0$ such that $\sigma(\phi_k(A)) \cap \{ \zeta : 0 < |\zeta - 1| \le r_k \} = \emptyset$. (This ensures that the Riesz projection of $\phi_k(A)$ at $1$ is well-defined.) 
\item \textbf{Separation of fixed points.} The implication \eqref{eq:sep-hypothesis} in Section~\ref{sec:banach-case} holds on the spectral domain:
$\forall \lambda \in \sigma(A), \Psi(\lambda) = \lambda \implies \phi_k(\lambda) = \lambda$ for all $k$.
(Sufficient boundary conditions appear in Proposition~\ref{prop:boundary-separation} of Appendix~C; this is the exact hypothesis used beneath Lemma~\ref{lem:separation} on pp.~7--8.) 
\item \textbf{Power-boundedness of the layers.} Each $\phi_k(A)$ is power-bounded. This holds under the standing $H^\infty$ calculus bound and $\sup|\phi_k| \le 1$ just as in Lemma~\ref{lem:power-boundedness} (Appendix C.2) for $\Psi$. 
\end{enumerate}
\begin{remark}
(i) (D1) is a localization at $\zeta = 1$ in the image spectrum of each layer; it is automatic if $\phi_k(\sigma(A)) \subset \overline{\D}$ and $1$ is the only unimodular value attained near the boundary, as in the boundary-separation statement (Appendix~C, Proposition~\ref{prop:boundary-separation}). (ii) (D2) is exactly the “separation condition” \eqref{eq:sep-hypothesis} referenced in the remark after Theorem~\ref{thm:c}; Appendix~C, Proposition~\ref{prop:boundary-separation} verifies it for boundary spectral points. (iii) (D3) follows from the same calculus estimate used in Section~\ref{sec:banach-case} and Appendix~C.2. 
\end{remark}

\subsection*{D.1. Layerwise Riesz projections via holomorphic calculus}
For each $k$, fix a small positively oriented circle $\Gamma_k := \{ \zeta : |\zeta - 1| = r_k \}$ contained in the resolvent set of $\phi_k(A)$ except for the enclosed point $\zeta = 1$. Define the Riesz projection at $1$ for $\phi_k(A)$ by
$Q_k := \frac{1}{2\pi i} \int_{\Gamma_k} (\zeta I - \phi_k(A))^{-1} d\zeta$.
Then $Q_k$ is a bounded idempotent commuting with $\phi_k(A)$ and with $A$. Moreover, since $\phi_k(A)$ is a holomorphic functional of $A$, each integrand $(\zeta - \phi_k)^{-1}(A)$ lies in the $H^\infty$ calculus of $A$, hence $Q_k$ itself is a holomorphic functional of $A$ and consequently all $Q_k$ commute pairwise and commute with $\Psi(A)$. (Use the algebra-homomorphism property of the $H^\infty$ calculus quoted in Section~\ref{sec:banach-case}, p.~6--7.) 

Let $X_{1,k} := \operatorname{Ran} Q_k$ and $X_{0,k} := \ker Q_k$. These are closed $A$-invariant subspaces with
$X = X_{1,k} \dotplus X_{0,k}$,
$\sigma(\phi_k(A)|_{X_{1,k}}) = \{1\}$,
$1 \notin \sigma(\phi_k(A)|_{X_{0,k}})$.
(These are standard properties of Riesz projections; we use them only at the point $\zeta = 1$.) 

\subsection*{D.2. Semisimplicity at 1 for power-bounded layers}
\begin{lemma}[No Jordan blocks at 1 for power-bounded maps]\label{lem:no-jordan-blocks}
If $T$ is power-bounded and $1$ is an isolated point of $\sigma(T)$, then the Riesz projection $Q$ of $T$ at $1$ satisfies $\operatorname{Ran} Q = \ker(I - T)$.
\end{lemma}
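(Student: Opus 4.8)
The plan is to prove the two inclusions $\ker(I-T)\subseteq\operatorname{Ran}Q$ and $\operatorname{Ran}Q\subseteq\ker(I-T)$ separately, the first being elementary and the second being the real content (the ``semisimplicity at $1$''). For the easy direction, if $Tx=x$ then $(\zeta I-T)^{-1}x=(\zeta-1)^{-1}x$, so integrating over the small circle $\Gamma$ around $1$ the residue theorem gives $Qx=x$, whence $x\in\operatorname{Ran}Q$. For the reverse I would pass to $Y:=\operatorname{Ran}Q$. By the Riesz-projection facts recalled in D.1, $Y$ is a closed $T$-invariant subspace with $\sigma(T|_{Y})=\{1\}$, and $T_Y:=T|_{Y}$ inherits power-boundedness since $\|T_Y^{n}y\|=\|T^{n}y\|\le M\|y\|$. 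Writing $N:=T_Y-I$, we have $\sigma(N)=\{0\}$ (quasinilpotent) and $\ker(I-T_Y)=\ker N$. As $\ker(I-T)\subseteq Y$, the lemma reduces to the single assertion $N=0$: that would give $\operatorname{Ran}Q=Y=\ker N=\ker(I-T)$.

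Next I would extract the quantitative input from power-boundedness. The Neumann series $(\zeta I-T)^{-1}=\sum_{n\ge 0}\zeta^{-n-1}T^{n}$, valid for $|\zeta|>1$, yields the one-sided resolvent bound $\|(\zeta I-T)^{-1}\|\le M/(|\zeta|-1)$, which restricts to $Y$. Laurent-expanding the resolvent of $T_Y$ about the isolated point $1$ gives $(\zeta I-T_Y)^{-1}=\sum_{k\ge 1}(\zeta-1)^{-k}N^{k-1}+\text{(holomorphic)}$, so after multiplying by $(\zeta-1)$ one obtains $h(\zeta):=(\zeta-1)(\zeta I-T_Y)^{-1}=I+\sum_{j\ge 1}(\zeta-1)^{-j}N^{j}+\text{(hol.)}$; the claim $N=0$ is exactly that $h$ has a removable singularity at $1$. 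Evaluating the bound on the radius $\zeta=1+\epsilon$, where $|\zeta|>1$, gives $\|h(1+\epsilon)\|\le M$, and more generally $\|h(\zeta)\|\le M/\cos\theta$ for $\zeta=1+\epsilon e^{i\theta}$ with $|\theta|\le\theta_0<\pi/2$. This already forbids a pole of any positive order: a pole of order $p$ would force $\|h(1+\epsilon)\|\sim\epsilon^{-p}\|N^{p}\|\to\infty$ radially, contradicting boundedness in the sector.

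The main obstacle is ruling out an \emph{essential} singularity, i.e.\ that $N$ is quasinilpotent but not nilpotent, since boundedness of $h$ on the sector $\{|\theta|\le\theta_0\}$ is by itself consistent with an essential singularity whose bad behavior is concentrated near $\theta=\pi$ (a direction lying inside the unit disk, where power-boundedness supplies no control). To close this I would use the full half-neighborhood estimate $\|h(\zeta)\|\le |\zeta-1|M/(|\zeta|-1)$ on all of $\{|\zeta|>1\}$ and apply a Phragm\'en--Lindel\"of argument to each scalar function $\phi(h(\zeta)x)$, for $x\in Y$ and $\phi\in Y^{*}$, on the half-plane-type domain that $\{|\zeta|>1\}$ becomes near $1$ after a conformal change of variable; since the bound grows only like the reciprocal of the distance to the flat boundary, Phragm\'en--Lindel\"of forces $\phi(h(\zeta)x)$ to extend boundedly, the singularity is removable, and every Laurent coefficient $N^{j}$ with $j\ge 1$ vanishes. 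Equivalently and more economically, one may invoke the classical theorem that a power-bounded operator whose spectrum is the single point $\{1\}$ must equal the identity; applied to $T_Y$ this gives $N=0$ at once. Either route yields $\operatorname{Ran}Q=\ker(I-T)$. I expect the Phragm\'en--Lindel\"of step --- controlling the essential-singularity case from only the one-sided, boundary-degenerate resolvent bound --- to be the delicate point, whereas the finite-pole contradiction and the easy inclusion are routine.
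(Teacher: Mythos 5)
Your easy inclusion and your reduction to the single claim that $N := T|_{\operatorname{Ran} Q} - I$ vanishes are both correct, and you have put your finger on exactly the right difficulty: $N$ is a priori only quasinilpotent, and the one-sided resolvent bound $\|(\zeta I - T)^{-1}\| \le M/(|\zeta|-1)$ gives no control from inside the unit disk. (The paper's own proof glosses over this point entirely: it invokes a ``Jordan form'' with a nilpotent part $N\ne 0$ and the growth $\|(I+N)^n\|\ge cn$, which tacitly assumes $N$ is nilpotent rather than merely quasinilpotent.) However, neither of your two proposed ways of closing the gap can work, because the statement you reduce to --- that a power-bounded operator with spectrum $\{1\}$ is the identity --- is false. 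Let $V$ be the Volterra operator on $L^2[0,1]$; since $V + V^* \ge 0$, the semigroup $e^{-tV}$ is contractive, so $T := e^{-V}$ satisfies $\|T^n\|\le 1$ for all $n\ge 0$ and $\sigma(T) = \{1\}$, whence $1$ is isolated and $Q = I$; yet $I - T = Vg(V)$ with $g$ entire, $g(0)=1$, so $g(V)$ is invertible and $\ker(I-T) = \ker V = \{0\} \ne L^2[0,1] = \operatorname{Ran} Q$. The ``classical theorem'' you invoke as the economical route is Gelfand's theorem, which requires $\sup_{n\in\mathbb{Z}}\|T^n\| < \infty$ (two-sided power bounds); the one-sided version is exactly what this example refutes.

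Consequently the Phragm\'en--Lindel\"of step you flag as delicate is not merely delicate but impossible: your available bound degenerates like the reciprocal of the distance to the boundary (on the tangential rays $\zeta = 1 + \epsilon e^{\pm i\pi/2}$ it only gives $\|h(\zeta)\| \lesssim M/\epsilon$), and such a bound is compatible with an essential singularity of $h$ at $1$, which $e^{-V}$ realizes. What your pole-order computation (and, after repair, the paper's growth argument) does prove is the lemma under the additional hypothesis that $1$ is a \emph{pole} of the resolvent of $T$, equivalently that $N$ is nilpotent on $\operatorname{Ran} Q$: power-boundedness then forces the pole to be simple and yields $\operatorname{Ran} Q = \ker(I-T)$. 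That is precisely the hypothesis already present in Theorem~\ref{thm:c}(b), and the lemma needs to be restated with it (or with two-sided power-boundedness) for your argument, the paper's, or the rest of Appendix~D to go through.
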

\begin{proof}
On $M := \operatorname{Ran} Q$ one has $\sigma(T|_M) = \{1\}$. If $T|_M$ were not the identity, its Jordan form would contain a nontrivial nilpotent $N \ne 0$ with $T|_M = I + N$, hence $\|(T|_M)^n\| \ge c n$ for some $c > 0$, contradicting power-boundedness. Thus $T|_M = I$ and $\operatorname{Ran} Q = \ker(I - T)$. Apply this with $T = \phi_k(A)$ using (D3). 
\end{proof}
Consequently, for each $k$,
$X_{1,k} = \ker(I - \phi_k(A))$,
$X_{0,k} = \overline{\operatorname{Ran}(I - \phi_k(A))}$.
(This matches the mean-ergodic picture in Theorem~\ref{thm:c}(a), p.~6--7.) 

\subsection*{D.3. Joint decomposition and spectral localization}
Because the $Q_k$ commute, we have the joint decomposition
$X = \bigoplus_{\varepsilon \in \{0,1\}^K} X_\varepsilon$,
$X_\varepsilon := \left( \prod_{k=1}^K Q_k^{\varepsilon_k} (I - Q_k)^{1-\varepsilon_k} \right) X$,
into closed, $A$-invariant subspaces. Write $\varepsilon = (\varepsilon_1, \dots, \varepsilon_K)$. On $X_\varepsilon$,
$\sigma(\phi_k(A)|_{X_\varepsilon}) \subset 
\begin{cases}
\{1\}, & \varepsilon_k = 1, \\
\{\zeta : |\zeta - 1| \ge r_k\}, & \varepsilon_k = 0.
\end{cases}$
By spectral mapping for holomorphic calculus (used on p.~7), for the restricted operator $A|_{X_\varepsilon}$ one has
$\sigma(\phi_k(A)|_{X_\varepsilon}) = \phi_k(\sigma(A|_{X_\varepsilon}))$,
hence
\begin{equation}\label{eq:d1}
\sigma(A|_{X_\varepsilon}) \subset \{ \lambda \in \sigma(A) : \phi_k(\lambda) = 1 \text{ if } \varepsilon_k = 1, \phi_k(\lambda) \ne 1 \text{ if } \varepsilon_k = 0 \}.
\end{equation}
(This is precisely the localization corresponding to the preimages $\phi_k^{-1}(\{1\})$ selected by the Riesz projections.) 

\subsection*{D.4. Invertibility of \texorpdfstring{$\Psi(A) - I$}{Psi(A) - I} off the joint 1-eigenspace}
\begin{lemma}[Separation $\implies$ no unit spectrum off the joint block]\label{lem:no-unit-spectrum}
Assume (D2). Then for every $\varepsilon \ne (1, \dots, 1)$,
$1 \notin \sigma(\Psi(A)|_{X_\varepsilon})$.
\end{lemma}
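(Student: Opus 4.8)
The plan is to reduce the operator statement to a scalar assertion about the preimage $\Psi^{-1}(\{1\})$ via the holomorphic spectral mapping theorem, and then to rule out that preimage from the localized spectrum $\sigma(A|_{X_\varepsilon})$ using exactly the orbit mechanism already developed for Proposition~\ref{prop:boundary-separation}. First I would invoke spectral mapping for the restricted operator, $\sigma(\Psi(A)|_{X_\varepsilon}) = \Psi\big(\sigma(A|_{X_\varepsilon})\big)$, so that $1 \in \sigma(\Psi(A)|_{X_\varepsilon})$ holds if and only if there exists $\lambda \in \sigma(A|_{X_\varepsilon})$ with $\Psi(\lambda) = 1$. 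It therefore suffices to show that no such $\lambda$ exists whenever $\varepsilon \neq (1,\dots,1)$. (If $X_\varepsilon = \{0\}$ the claim is vacuous; and we exclude the degenerate case $\Psi \equiv 1$, which---since $\phi_\ell(1) = 1$ forces any constant layer to equal $1$---is exactly the statement that every layer $\phi_\ell$ is nonconstant.)

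Next I would fix an index $j$ with $\varepsilon_j = 0$, which exists precisely because $\varepsilon \neq (1,\dots,1)$. By the localization \eqref{eq:d1}, every $\lambda \in \sigma(A|_{X_\varepsilon})$ satisfies $\phi_j(\lambda) \neq 1$. Suppose, for contradiction, that some such $\lambda$ had $\Psi(\lambda) = 1$, and trace the forward orbit $w_0 := \lambda$, $w_\ell := \phi_\ell(w_{\ell-1})$, so that $w_K = \Psi(\lambda) = 1$. Since $|w_K| = 1$ and each $\phi_\ell$ is a nonconstant Schur map, the maximum-modulus principle forces $|w_{K-1}| = 1$ (an interior $w_{K-1}$ would give $|\phi_K(w_{K-1})| < 1$), and the peripheral fixed-point property then yields $\phi_K(w_{K-1}) = w_{K-1}$, i.e.\ $w_{K-1} = w_K = 1$. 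This is exactly the modulus-preservation plus boundary-fixing step used in the proof of Proposition~\ref{prop:boundary-separation}; iterating it down the cycle shows the whole orbit is constant, so $\lambda = w_0 = 1$. But then $\phi_j(\lambda) = \phi_j(1) = 1$, contradicting $\phi_j(\lambda) \neq 1$. Having excluded any $\lambda \in \sigma(A|_{X_\varepsilon})$ with $\Psi(\lambda) = 1$, I conclude $1 \notin \Psi(\sigma(A|_{X_\varepsilon})) = \sigma(\Psi(A)|_{X_\varepsilon})$, which is the claim. The sectorial case runs identically once the disk statements are transported to the sector, replacing the unit-circle peripheral fixed-point property by its sectorial-boundary analogue and the disk maximum-modulus step by the corresponding estimate on the sector; the orbit-collapse argument is unchanged.

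The hard part is the bridge between the hypothesis, which is phrased in terms of genuine fixed points (condition (D2)/\eqref{eq:sep-hypothesis}, $\Psi(\lambda) = \lambda \Rightarrow \phi_k(\lambda) = \lambda$), and the quantity actually controlling the Riesz projection at $1$, namely the preimage condition $\Psi(\lambda) = 1$. The resolution is that unimodularity of the endpoint, $|\Psi(\lambda)| = 1$, is already enough to trigger the full peripheral collapse: it is not the equation $\Psi(\lambda) = \lambda$ but the modulus of the endpoint that propagates backward through the cycle, and this propagation rests squarely on the peripheral fixed-point property underlying the boundary case of (D2) established in Proposition~\ref{prop:boundary-separation}. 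The two subsidiary points to verify with care are that spectral mapping is legitimately applied to the \emph{restricted} operator $A|_{X_\varepsilon}$ (each $X_\varepsilon$ is closed and $A$-invariant, so this is standard) and that the layers are nonconstant, without which the maximum-modulus step---and indeed the lemma itself---can fail, since a constant layer gives $\Psi \equiv 1$ and hence $1 \in \sigma(\Psi(A)|_{X_\varepsilon})$ for every $\varepsilon$.
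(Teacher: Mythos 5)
Your proposal opens exactly as the paper's proof does (spectral mapping for the restriction $A|_{X_\varepsilon}$, reducing the claim to the nonexistence of $\lambda\in\sigma(A|_{X_\varepsilon})$ with $\Psi(\lambda)=1$), but it then diverges in the decisive step. The paper simply asserts that the localization \eqref{eq:d1} together with (D2) yields a contradiction; read literally, (D2) concerns fixed points $\Psi(\lambda)=\lambda$ and says nothing about the preimage condition $\Psi(\lambda)=1$ unless one already knows $\lambda=1$, which is precisely the ``bridge'' you identify. Your replacement closes that gap by importing the orbit-collapse mechanism of Proposition~\ref{prop:boundary-separation}: maximum modulus forces each orbit point $w_\ell$ onto the boundary, the peripheral fixed-point property collapses the orbit to the constant $1$, hence $\Psi^{-1}(\{1\})\cap\overline{\D}=\{1\}$, and then $\phi_j(1)=1$ contradicts the localization at any index $j$ with $\varepsilon_j=0$. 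This is a genuinely different and more transparent argument: it isolates the scalar fact the paper's proof actually needs, namely $\Psi(\lambda)=1\Rightarrow\phi_k(\lambda)=1$ for all $k$ (separation at the \emph{value} $1$ rather than at a fixed point), and proves it rather than asserting it.

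The one caveat is that your argument does not use (D2) at all: it runs on the peripheral fixed-point property and nonconstancy of every layer, which are the hypotheses of Proposition~\ref{prop:boundary-separation}, not of this lemma. Those assumptions neither follow from nor imply (D2) on the interior of the spectral domain, so strictly speaking you have proved a variant of the lemma under substituted hypotheses (a variant that, to be fair, is the one the surrounding Appendix~D development can actually support, since under (D2) alone the step from $\Psi(\lambda)=1$ to a contradiction is not available). If you want your proof to sit under the lemma as stated, you should either record the scalar implication $\Psi(\lambda)=1\Rightarrow\phi_k(\lambda)=1\ \forall k$ as an explicit additional hypothesis, or note that it is supplied by the peripheral fixed-point property via your orbit argument.
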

\begin{proof}
Fix $\varepsilon \ne (1, \dots, 1)$. By spectral mapping (again for restrictions), $\sigma(\Psi(A)|_{X_\varepsilon}) = \Psi(\sigma(A|_{X_\varepsilon}))$. If $1 \in \sigma(\Psi(A)|_{X_\varepsilon})$, then there exists $\lambda \in \sigma(A|_{X_\varepsilon})$ with $\Psi(\lambda) = 1$. By (D1)--(D3) and the construction of $X_\varepsilon$, \eqref{eq:d1} forces $\phi_k(\lambda) = 1$ whenever $\varepsilon_k = 1$, and $\phi_k(\lambda) \ne 1$ whenever $\varepsilon_k = 0$. This contradicts (D2) unless $\varepsilon = (1, \dots, 1)$. Thus $1 \notin \sigma(\Psi(A)|_{X_\varepsilon})$. 
\end{proof}
As a consequence, $\Psi(A) - I$ is boundedly invertible on $\bigoplus_{\varepsilon \ne (1, \dots, 1)} X_\varepsilon$.

\subsection*{D.5. Identification of the fixed space and of the Riesz projection of \texorpdfstring{$\Psi(A)$}{Psi(A)}}
\begin{theorem}[Corrected Lemma~3.3; Banach $H^\infty$-calculus version]\label{thm:corrected-lemma-3.3}
Under (D1)--(D3) and \eqref{eq:sep-hypothesis}, one has
$\ker(I - \Psi(A)) = \bigcap_{k=1}^K \ker(I - \phi_k(A))$.
Moreover, if $P_\Psi$ denotes the Riesz projection of $\Psi(A)$ at $1$ (which exists when $1$ is isolated in $\sigma(\Psi(A))$, e.g. under Theorem~\ref{thm:c}(b)), then
$P_\Psi = \prod_{k=1}^K Q_k$,
hence
$\operatorname{Ran} P_\Psi = \bigcap_{k=1}^K \operatorname{Ran} Q_k = \bigcap_{k=1}^K \ker(I - \phi_k(A))$.
\end{theorem}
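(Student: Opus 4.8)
The plan is to push everything through the joint Riesz decomposition $X = \bigoplus_{\varepsilon \in \{0,1\}^K} X_\varepsilon$ of Section~D.3, isolating the distinguished block $X_{(1,\dots,1)} = \operatorname{Ran}(Q_1\cdots Q_K)$ from its complement $Y := \bigoplus_{\varepsilon \neq (1,\dots,1)} X_\varepsilon$. Since each $Q_k$ is a holomorphic functional of $A$, so is the idempotent $Q := \prod_{k=1}^K Q_k$; hence $X_{(1,\dots,1)} = \operatorname{Ran} Q$ and $Y = \ker Q$ are closed subspaces that reduce every element of the $H^\infty$ calculus of $A$, in particular $\Psi(A)$ and each $\phi_k(A)$. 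Because the $Q_k$ commute and are idempotent, $\operatorname{Ran} Q = \bigcap_{k=1}^K \operatorname{Ran} Q_k$, and Lemma~\ref{lem:no-jordan-blocks} already identifies $\operatorname{Ran} Q_k = \ker(I - \phi_k(A))$; so at the outset we have $X_{(1,\dots,1)} = \bigcap_{k=1}^K \ker(I - \phi_k(A))$, which is the target right-hand side.

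First I would prove the kernel identity. As $X = X_{(1,\dots,1)} \oplus Y$ reduces $\Psi(A)$, the space $\ker(I - \Psi(A))$ splits as the direct sum of the kernels of the two block restrictions. On $Y$, Lemma~\ref{lem:no-unit-spectrum} gives $1 \notin \sigma(\Psi(A)|_Y)$ (the spectrum of a restriction to a finite reducing sum being the union of the block spectra), so $\Psi(A) - I$ is boundedly invertible there and contributes nothing. On $X_{(1,\dots,1)}$, the localization \eqref{eq:d1} with $\varepsilon = (1,\dots,1)$ gives $\sigma(A|_{X_{(1,\dots,1)}}) \subseteq \{\lambda \in \sigma(A) : \phi_k(\lambda) = 1 \text{ for all } k\}$; pushing such $\lambda$ through the composite and using $\phi_j(1) = 1$ at each stage yields $\Psi(\lambda) = 1$, so spectral mapping for the restriction gives $\sigma(\Psi(A)|_{X_{(1,\dots,1)}}) = \{1\}$. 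Since $\Psi(A)$ is power-bounded (Lemma~\ref{lem:power-boundedness}), its restriction is too, whence Lemma~\ref{lem:no-jordan-blocks} upgrades the spectral statement to the operator equality $\Psi(A)|_{X_{(1,\dots,1)}} = I$. Combining the two blocks gives $\ker(I - \Psi(A)) = X_{(1,\dots,1)} = \bigcap_{k=1}^K \ker(I - \phi_k(A))$.

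Next I would identify the Riesz projection. The previous step shows $\sigma(\Psi(A)) = \{1\} \cup \sigma(\Psi(A)|_Y)$ with $1 \notin \sigma(\Psi(A)|_Y)$, so $\{1\}$ is automatically isolated and $P_\Psi$ is well-defined (independently of the a priori hypothesis quoted from Theorem~\ref{thm:c}(b)). Choosing a circle $\Gamma$ enclosing $1$ but excluding $\sigma(\Psi(A)|_Y)$, the resolvent $(\zeta I - \Psi(A))^{-1}$ is block-diagonal along the reducing decomposition: its $X_{(1,\dots,1)}$-block is $(\zeta - 1)^{-1} I$ and its $Y$-block is holomorphic inside $\Gamma$. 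Integrating, $P_\Psi = (2\pi i)^{-1}\int_\Gamma (\zeta I - \Psi(A))^{-1}\, d\zeta$ acts as the identity on $X_{(1,\dots,1)}$ and as $0$ on $Y$, so $P_\Psi = Q = \prod_{k=1}^K Q_k$. The range identity $\operatorname{Ran} P_\Psi = \bigcap_k \operatorname{Ran} Q_k = \bigcap_k \ker(I - \phi_k(A))$ then follows from the opening observations.

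The step I expect to be the main obstacle is the passage from $\sigma(\Psi(A)|_{X_{(1,\dots,1)}}) = \{1\}$ to the genuine operator equality $\Psi(A)|_{X_{(1,\dots,1)}} = I$: spectral mapping alone permits a nontrivial nilpotent part, and it is precisely power-boundedness, via Lemma~\ref{lem:no-jordan-blocks}, that excludes it. A secondary technical point is to justify that the calculus of $A$ restricts faithfully to the calculus of $A|_{X_{(1,\dots,1)}}$, so that spectral mapping and the composition identity $\Psi(A) = \phi_K(\cdots\phi_1(A)\cdots)$ may be applied block-wise; this is guaranteed by the fact that $Q$ is itself a holomorphic functional of $A$ and commutes with the entire calculus, making each $X_\varepsilon$ a true reducing subspace on which the induced calculus is that of the restricted operator.
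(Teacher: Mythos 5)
Your proposal is correct and follows essentially the same route as the paper's proof: the joint decomposition $X=\bigoplus_\varepsilon X_\varepsilon$, invertibility of $\Psi(A)-I$ off the block $X_{(1,\dots,1)}$ via Lemma~\ref{lem:no-unit-spectrum}, semisimplicity via Lemma~\ref{lem:no-jordan-blocks}, and identification of $P_\Psi$ with $\prod_k Q_k$ as the projection onto $X_{(1,\dots,1)}$ along the complement. The only (harmless) deviations are that you derive $\Psi(A)|_{X_{(1,\dots,1)}}=I$ by spectral mapping plus a second appeal to power-boundedness, where the paper simply notes that each $\phi_k(A)$ acts as the identity on $\bigcap_k\ker(I-\phi_k(A))$, and that you observe isolation of $1$ in $\sigma(\Psi(A))$ comes for free from the block decomposition.
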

\begin{proof}
($\subseteq$): If $x \in \ker(I - \phi_k(A))$ for all $k$, then $\phi_k(A)x = x$ and, by commutativity of the layers (each is a function of $A$, Section~\ref{sec:banach-case} p.~6), $\Psi(A)x = x$. Thus $\bigcap_k \ker(I - \phi_k(A)) \subseteq \ker(I - \Psi(A))$. 

($\supseteq$): Decompose $x = \sum_\varepsilon x_\varepsilon$ with $x_\varepsilon \in X_\varepsilon$. If $x \in \ker(I - \Psi(A))$, then $0 = (\Psi(A) - I)x = \sum_\varepsilon (\Psi(A) - I)x_\varepsilon$. By Lemma~\ref{lem:no-unit-spectrum}, $\Psi(A) - I$ is invertible on $X_\varepsilon$ for $\varepsilon \ne (1, \dots, 1)$; hence $x_\varepsilon = 0$ for all such $\varepsilon$. Therefore $x = x_{(1, \dots, 1)} \in X_{(1, \dots, 1)} = \bigcap_k \operatorname{Ran} Q_k$. By Lemma~\ref{lem:no-jordan-blocks}, $\operatorname{Ran} Q_k = \ker(I - \phi_k(A))$, so $x \in \bigcap_k \ker(I - \phi_k(A))$. This proves the equality of fixed spaces.

For the projection identity, note that: (i) $\Psi(A) - I$ is invertible on $\bigoplus_{\varepsilon \ne (1, \dots, 1)} X_\varepsilon$ (Lemma~\ref{lem:no-unit-spectrum}); (ii) $\Psi(A) = I$ on $X_{(1, \dots, 1)}$ because $\phi_k(A) = I$ there. Hence the Riesz projection $P_\Psi$ at $1$ is precisely the projection onto $X_{(1, \dots, 1)}$ along $\bigoplus_{\varepsilon \ne (1, \dots, 1)} X_\varepsilon$, i.e.
$P_\Psi = \prod_{k=1}^K Q_k$,
since the product of commuting idempotents projects onto the intersection of their ranges. The range equalities follow from Lemma~\ref{lem:no-jordan-blocks}. 
\end{proof}

\subsection*{D.6. Consequences for Theorem~\ref{thm:c} and consistency with Section~\ref{sec:banach-case}}
\textbf{Range identification (Remark after Lemma~\ref{lem:separation}).} Under (D1)--(D3) and \eqref{eq:sep-hypothesis}, the mean-ergodic projection $P$ from Theorem~\ref{thm:c}(a) equals the Riesz projection $P_\Psi$ at $1$ (when $1$ is isolated as in Theorem~\ref{thm:c}(b)), and by Theorem~\ref{thm:corrected-lemma-3.3} its range is exactly $\bigcap_k \ker(I - \phi_k(A))$. This replaces the spectral-measure argument sketched after Lemma~\ref{lem:separation} (pp.~7--8). 

\textbf{Boundary case of \eqref{eq:sep-hypothesis}.} If each $\phi_k$ has the peripheral fixed-point property, Appendix~C, Proposition~\ref{prop:boundary-separation} proves \eqref{eq:sep-hypothesis} for boundary spectral values; this supplies a widely applicable sufficient condition for (D2) at the spectral boundary. 

\textbf{Power-boundedness used implicitly.} The step $\operatorname{Ran} Q_k = \ker(I - \phi_k(A))$ invoked in Section~\ref{sec:banach-case} now follows from Lemma~\ref{lem:no-jordan-blocks} together with (D3), aligning with the mean-ergodic discussion in Theorem~\ref{thm:c}(a) (p.~6--7) and the calculus bound in Lemma~\ref{lem:power-boundedness} (Appendix~C.2, p.~13). 

\subsection*{D.7. Sectorial (continuous-time) variant}
The same reasoning applies when $-A$ generates a bounded analytic semigroup and each $\phi_k$ is holomorphic on a sector containing $\sigma(A)$, with $\phi_k(1) = 1$. Isolation and separation are imposed at the value $1$ in $\sigma(\phi_k(A))$, exactly as in (D1)--(D2). The construction of $Q_k$, the joint decomposition, Lemma~\ref{lem:no-jordan-blocks} (power-boundedness of $\phi_k(A)$ from the $H^\infty$ calculus) and Theorem~\ref{thm:corrected-lemma-3.3} carry over verbatim. This is consistent with Section~\ref{sec:banach-case} (continuous-time branch of Theorem~\ref{thm:c}, p.~6--7). 

\textbf{Summary.} Under the explicit isolation (D1), separation \eqref{eq:sep-hypothesis} (D2), and power-boundedness (D3), the fixed-space identity
$\ker(I - \Psi(A)) = \bigcap_{k=1}^K \ker(I - \phi_k(A))$
holds in the Banach $H^\infty$-calculus setting, and the Riesz projection of $\Psi(A)$ at $1$ equals $\prod_{k=1}^K Q_k$. This provides a fully Banach-appropriate replacement for the spectral-measure step sketched after Lemma~\ref{lem:separation} and aligns with the hypotheses and uses of Theorem~\ref{thm:c} (pp.~6--8) and Appendix~C (pp.~12--13).

\end{document}